\documentclass[a4paper]{amsart}
\pdfoutput=1
\usepackage[utf8]{inputenc}
\usepackage[T1]{fontenc}
\usepackage{lmodern}
\usepackage{amssymb}
\usepackage[lite]{amsrefs}

\usepackage[utf8]{inputenc} 
\usepackage{mathtools,amsmath, amssymb, amsthm} 
\usepackage{hyperref} 
\usepackage{microtype}

\numberwithin{equation}{section}

\theoremstyle{plain}
\newtheorem{theorem}[equation]{Theorem}
\newtheorem{lemma}[equation]{Lemma}
\newtheorem{proposition}[equation]{Proposition}
\newtheorem{corollary}[equation]{Corollary}
\theoremstyle{definition}
\newtheorem{definition}[equation]{Definition}
\theoremstyle{remark}

\newtheorem{example}[equation]{Example}


\newcommand*{\nb}{\nobreakdash}

\newcommand*{\Cst}{\mathrm C^*}
\newcommand*{\Cont}{\mathrm C}
\newcommand*{\Cliff}{\mathrm {Cl}}
\newcommand*{\K}{\mathrm K}
\newcommand*{\Id}{\mathrm{Id}}
\DeclareMathOperator{\rank}{rank}
\DeclareMathOperator{\IM}{Im}
\DeclareMathOperator{\Aut}{Aut}
\DeclareMathOperator{\Endo}{End}
\newcommand*{\ima}{\mathrm i}
\newcommand*{\Gl}{\mathrm{Gl}}
\newcommand*{\Hom}{\mathrm{Hom}}

\newcommand*{\defeq}{\mathrel{\vcentcolon=}}
\newcommand*{\congto}{\xrightarrow\sim}

\newcommand{\N}{\mathbb N}
\newcommand{\Z}{\mathbb Z}
\newcommand{\R}{\mathbb R}
\newcommand{\C}{\mathbb C}
\newcommand{\Quat}{\mathbb H}
\newcommand{\Mat}{\mathbb M}
\newcommand{\Bound}{\mathbb B}
\newcommand{\T}{\mathbb T}
\newcommand{\Field}{\mathbb F}
\newcommand*{\Disk}{\mathbb D}
\newcommand*{\Sphere}{\mathbb S}

\newcommand{\conj}{\overline}

\DeclarePairedDelimiterX{\setgiven}[2]{\{}{\}}{#1\,{:}\,\mathopen{}#2}

\begin{document}

\title{On equivariant embeddings of $G$-bundles}

\author{Malkhaz Bakuradze}
\email{malkhaz.bakuradze@tsu.ge}
\address{A. Razmadze Mathematics Institute, Faculty of Exact and Natural Sciences, Tbilisi State University\\
Tbilisi\\
Georgia}

\author{Ralf Meyer}
\email{rmeyer2@uni-goettingen.de}
\address{Mathematisches Institut\\
Universit\"at G\"ottingen\\Bunsenstra\ss e 3--5\\
37073 G\"ottingen\\Germany}

\keywords{equivariant K-theory; ``real'' vector bundle;
  ``quaternionic'' vector bundle; stable isomorphism}

\subjclass{19L99 (primary); 19L47, 55S35 (secondary)}
\thanks{This work was supported by the Shota Rustaveli National Science Foundation of Georgia (SRNSFG) grant FR-23-779.}

\begin{abstract}
  For a compact group~\(G\), we give a sufficient condition for
  embedding one \(G\)-equivariant vector bundle into another one and
  for a stable isomorphism between two such bundles to imply an
  isomorphism.  Our criteria involve multiplicities of irreducible
  representations of stabiliser groups.  We also apply our result to
  ordinary nonequivariant vector bundles over the fields of
  quaternions, real and complex numbers and to ``real'' and
  ``quaternionic'' vector bundles.  Our results apply to the
  classification of symmetry-protected topological phases of matter,
  providing computable bounds on the number of energy bands required
  to distinguish robust from fragile topological phases.
\end{abstract}
\maketitle

\section{Introduction}

The \(\K\)\nb-theory of a compact space~\(X\) may be computed by hand
by classifying vector bundles over~\(X\) of increasing rank~\(k\),
starting with line bundles.  A vector bundle of rank~\(k\) produces
one of rank \(k+1\) by adding a trivial bundle.  For
finite-dimensional~\(X\), there is a threshold~\(k_0\) depending on
the dimension of~\(X\) such that this stabilisation map is bijective
for \(k\ge k_0\) (see~\cite{Husemoller:Fibre_bundles_3}).  The authors
recently extended this classical result to ``real'' and
``quaternionic'' vector bundles over a space with involution
(see~\cite{Bakuradze-Meyer:Iso_stable_iso}).  These are vector bundles
that carry a conjugate-linear automorphism that lifts the involution
on the base.  This result is interesting in connection with the
classification of topological phases of matter in physics.  Here a
certain vector bundle, called the Bloch bundle, is used as a
topological invariant of a quantum mechanical physical system.  It is
physically interesting to know whether or not this vector bundle is
trivial because this is equivalent to the existence of ``exponentially
localised Wannier functions'' (see
\cite{DeNittis-Lein:Wannier_zero_flux}*{Proposition~4.3}).  It is
usually much easier to decide whether the Bloch bundle is stably
trivial, that is, whether it becomes trivial after adding another
trivial bundle.  This means that its class in reduced \(\K\)\nb-theory
vanishes.  If the reduced \(\K\)\nb-theory is torsion-free, this
happens if and only if its Chern numbers vanish.  Thus it is useful to
know whether the triviality of the Bloch bundle follows from its
stable triviality.  Our results show that for vector bundles of
suitable rank, stable isomorphism is the same as isomorphism.  In
physics parlance, this says that all fragile topological phases are
already stable.

For a quantum system with time-reversal symmetry, the Bloch bundle
inherits the extra structure of a ``real'' or ``quaternionic'' vector
bundle, depending on whether the square of the time-reversal symmetry
is~\(\pm1\).  This provides a link between the result
in~\cite{Bakuradze-Meyer:Iso_stable_iso} and the study of topological
phases.  Instead of a time-reversal symmetry, a system may also have
more classical crystallographic symmetries, which lead to
symmetry-protected topological phases, an area of much current
activity in the physics community.  These are described through a
finite subgroup~\(G\) of the orthogonal group.  Then the Bloch bundle
becomes a \(G\)\nb-equivariant vector bundle.  Once again, the
question is physically relevant whether the Bloch bundle is trivial
once its class in the reduced equivariant \(\K\)\nb-theory is trivial.


Unfortunately, \(G\)\nb-equivariant \(\K\)\nb-theory does not behave
as nicely as ``real'' or ``quaternionic'' \(\K\)\nb-theory in this
respect.  The reason is that there may be several nonisomorphic
irreducible representations.  Each finite-dimensional representation
\(\pi\colon G\to \Gl(V)\) gives rise to a trivial \(G\)\nb-equivariant
vector bundle \(X\times V\) over~\(X\).  An embedding of \(X\times V\)
into a \(G\)\nb-vector bundle~\(E\) over~\(X\) exists automatically if
\emph{all} irreducible representations of stabiliser groups contained
in~\(V\) are also contained in~\(E\) with sufficient multiplicity.
The rank of~\(E\), however, is no longer enough to control this, and
so stabilisation results may fail.

The key result in \cites{Husemoller:Fibre_bundles_3,
  Bakuradze-Meyer:Iso_stable_iso} is that any vector bundle of
sufficiently high rank must contain the trivial bundle of rank~\(1\)
as a subbundle or, equivalently, as a direct summand.  This says that
the stabilisation map from vector bundles of rank~\(k\) to
rank~\(k+1\) is surjective for sufficiently high~\(k\).  A relative
version of this result also implies that this map is injective for
sufficiently high~\(k\).  Example~\ref{exa:involutions_and_circle}
below shows that this result breaks down for equivariant
\(\K\)\nb-theory, even for the group~\(\Z/2\) acting on the circle by
complex conjugation.  Nevertheless, the proof techniques
in~\cite{Bakuradze-Meyer:Iso_stable_iso} also give related results in
equivariant \(\K\)\nb-theory.  The main point of this note is to
record these results.

Our main result also contains the classical results about real,
complex and quaternionic vector bundles and about ``real'' and
``quaternionic'' vector bundles over spaces with involution as special
cases.  More generally, we may cover vector bundles over the fields
\(\R\), \(\C\) and the quaternion skew-field~\(\Quat\) with extra
symmetries, which may be linear or conjugate-linear in the cases
\(\C\) and~\(\Quat\).  A rather general setup for symmetries in
quantum physics is developed in~\cite{Freed-Moore:Twisted_matter}.
The key idea for this is to shift the multiplication by the imaginary
unit~\(\ima\) in a complex vector bundle into the group action on the
underlying real vector bundle.  Thus a \(G\)\nb-equivariant
\(\C\)\nb-vector bundle \(E\to X\) becomes equivalent to a
\(G\times\Z/4\)\nb-equivariant \(\R\)\nb-vector bundle with some extra
properties, namely, the subgroup~\(\Z/4\) acts trivially on the
base~\(X\) and the square of the generator of~\(\Z/4\) acts by
multiplication by~\(-1\) in each fibre.  If part of~\(G\) acts by
conjugate-linear maps, this merely replaces the product
\(G\times\Z/4\) by a semidirect product.  Thus we identify ``real''
and ``quaternionic'' vector bundles with equivariant \(\R\)\nb-vector
bundles over the dihedral group and the quaternion group of
order~\(8\) with some extra properties.  These extra properties do not
concern the vector bundle maps, so that all these types of bundles
behave exactly like \(G\)\nb-equivariant \(\R\)\nb-vector bundles for
suitable~\(G\).

For best results, however, it is important that all stabiliser groups
of points in the base space have a unique irreducible representation
that can occur in the relevant vector bundles.  Then the
multiplicities of irreducible representations in our conditions may be
replaced by ranks of vector bundles, giving a much better result.
This is what allows for the special results for ``real'',
``quaternionic'' and ordinary real, complex or quaternionic vector
bundles.  Using this fact, we show that the results
in~\cite{Bakuradze-Meyer:Iso_stable_iso} are special cases of our
results here.

In Section~\ref{sec:bundles}, we explain how we treat various kinds of
vector bundles with extra structure as \(G\)\nb-equivariant
\(\R\)\nb-vector bundles for suitable groups~\(G\).  In particular,
this covers ``real'' and ``quaternionic'' vector bundles, but also
\(G\)-equivariant vector bundles over the fields \(\C\) and~\(\Quat\).
In Section~\ref{sec:main}, we state our main results about the
existence of trivial subbundles and about unstabilising a stable
isomorphism of vector bundles, both over relative and absolute
\(G\)-CW-complexes.  We prove these results in
Section~\ref{sec:proofs_main}.  In Section~\ref{sec:ordinary_real}, we
specialise to ordinary vector bundles without group action and to
``real'' and ``complex'' vector bundles.  We see that classical
results and the results in~\cite{Bakuradze-Meyer:Iso_stable_iso} are
special cases of our main theorems.  In
Section~\ref{sec:subbundle_trivial}, we use our main theorems to embed
equivariant vector bundles into a trivial bundle with sufficiently
high multiplicities.  This gives an equivariant version of Swan's
Theorem, where we also control the size of the trivial vector bundle
that we need.  In Section~\ref{sec:crystallographic}, we briefly
recall how equivariant Bloch vector bundles over tori arise from
insulators with crystallographic symmetries.  To illustrate our main
result, we then restrict further to the case where the only symmetry
besides translations is a point reflection.  In this case, we can show
that stable isomorphism is always the same as isomorphism if the
dimension is at most~\(4\).  We also show by an example that trivial
direct summands need not always exist for \(\Z/2\)-equivariant vector
bundles over the circle.

\section{Equivariant bundles}
\label{sec:bundles}

We are going to describe equivariant complex or quaternionic vector
bundles with extra symmetries as equivariant real vector bundles.  If
\(k,n\in\N\), we denote the image of~\(k\) in \(\Z/n\) by \([k]_n\) or
just~\([k]\).

\begin{proposition}
  \label{pro:C-vector_bundle_rewrite}
  Let~\(X\) be a topological space and equip~\(X\) with the trivial
  action of the group~\(\Z/4\).  The category of \(\C\)\nb-vector
  bundles over~\(X\) is isomorphic to the full subcategory of
  \(\Z/4\)\nb-equivariant \(\R\)\nb-vector bundles over~\(X\) where
  \([2]\in \Z/4\) acts by multiplication by~\(-1\) in each fibre.
\end{proposition}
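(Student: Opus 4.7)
The plan is to exhibit mutually inverse functors between the two categories, both of which act as the identity on the underlying topological bundle and merely relabel which piece of structure is remembered. In the forward direction, given a \(\C\)\nb-vector bundle \(E\to X\), I would forget down to the underlying \(\R\)\nb-vector bundle and declare \([1]\in \Z/4\) to act fibrewise by multiplication by~\(\ima\). Continuity of complex scalar multiplication gives a continuous \(\Z/4\)\nb-action, and \([2]\) then acts by \(\ima^2=-\Id\) as required. A \(\C\)\nb-linear bundle map is automatically \(\R\)\nb-linear and commutes with multiplication by~\(\ima\), hence is \(\Z/4\)\nb-equivariant.

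In the reverse direction, given a \(\Z/4\)\nb-equivariant \(\R\)\nb-vector bundle \(E\to X\) on which \([2]\) acts as \(-\Id\), I would define a \(\C\)\nb-scalar multiplication by \((a+b\ima)\cdot v \defeq a\,v + b\,[1]\cdot v\). Scalar associativity reduces to the identity \([1]^2\cdot v = [2]\cdot v = -v\), which is the hypothesis, and continuity of \(\C\)\nb-scalar multiplication reduces to continuity of the \(\Z/4\)\nb-action. On morphisms the two notions visibly coincide, since \(\Z/4\)\nb-equivariance over the trivial \(\Z/4\)\nb-action on~\(X\) amounts to commutation with the single generator~\([1]\), which is now exactly multiplication by~\(\ima\). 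The two constructions are inverse to each other on the nose, because the action of~\(\ima\) and the action of~\([1]\) are, by construction, literally the same map.

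The one genuine obstacle is verifying local triviality of the \(\C\)\nb-vector bundle produced by the reverse functor; the forward direction is automatic, since a local \(\C\)\nb-trivialisation \(E|_U\cong U\times\C^n\) is a local \(\R\)\nb-trivialisation \(U\times\R^{2n}\) intertwining the two actions of~\([1]\). For the reverse direction, my plan is the frame-extension argument. Fix \(x_0\in X\) and choose a basis \(v_1,\dots,v_n\) of the fibre \(E_{x_0}\) with respect to the newly defined \(\C\)\nb-structure. Extend these to continuous \(\R\)\nb-linear local sections \(s_1,\dots,s_n\) on a neighbourhood~\(U\) of~\(x_0\). Then \(s_1,[1]\cdot s_1,\dots,s_n,[1]\cdot s_n\) is an \(\R\)\nb-basis of~\(E_{x_0}\) by construction, so it remains an \(\R\)\nb-basis of fibres on a smaller neighbourhood by continuity of the change-of-basis determinant. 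On that neighbourhood \(s_1,\dots,s_n\) is therefore a \(\C\)\nb-frame, giving the required local \(\C\)\nb-trivialisation and completing the proof.
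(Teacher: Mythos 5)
Your proposal is correct and follows essentially the same route as the paper: the two categories are identified by the mutually inverse relabelling functors that let \([1]\in\Z/4\) act as multiplication by~\(\ima\) and vice versa, with morphisms matching because equivariance over the trivial action on~\(X\) is exactly commutation with~\([1]\). Your frame-extension argument for local \(\C\)\nb-triviality in the reverse direction is a welcome extra detail that the paper's proof leaves implicit.
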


\begin{proof}
  Let \(p\colon E\to X\) be a complex vector bundle.  It is also an
  \(\R\)\nb-vector bundle.  Multiplication by~\(\ima\) in each fibre
  gives a fibrewise \(\R\)\nb-linear map~\(I\) with \(I^2=-1\).  Since
  \(I^2=-1\) implies \(I^4=1\), we may view~\(E\) as a
  \(\Z/4\)\nb-equivariant \(\R\)\nb-vector bundle with the extra
  property that~\(\Z/4\) acts trivially on the base space~\(X\) of the
  bundle and \([2]\in \Z/4\) acts by multiplication by~\(-1\) in
  each fibre.  Conversely, such a \(\Z/4\)\nb-equivariant
  \(\R\)\nb-vector bundle comes from a \(\C\)\nb-vector bundle where
  multiplication by~\(\ima\) is the action of \([1]\in \Z/4\).
  These constructions are inverse to each other and
  natural, that is, they form an isomorphism of categories.
\end{proof}

Next, we enrich the isomorphism of categories in
Proposition~\ref{pro:C-vector_bundle_rewrite} to complex vector
bundles with extra symmetries.

\begin{definition}
  \label{def:G-gamma_equivariant} 
  Let~\(G\) be a compact group.  Let~\(X\) be a \(G\)\nb-space.  Let
  \(\gamma\colon G\to \Z/2\) be a group homomorphism.  A
  \emph{\((G,\gamma)\)-equivariant complex vector bundle} over~\(X\)
  is a \(\C\)\nb-vector bundle \(p\colon E\to X\) with a
  \(G\)\nb-action on~\(E\) such that~\(p\) is \(G\)\nb-equivariant and
  the maps \(E_x \to E_{g x}\), \(v\mapsto g\cdot v\), for \(g\in G\)
  are \(\C\)\nb-linear if \(\gamma(g)=[0]\) and conjugate-linear if
  \(\gamma(g)=[1]\); that is, these maps are additive and satisfy
  \(g\cdot (\lambda v) = \lambda (g\cdot v)\) if \(\gamma(g)=[0]\) and
  \(g\cdot (\lambda v) = \conj{\lambda} (g\cdot v)\) if
  \(\gamma(g)=[1]\).
\end{definition}

The cyclic group~\(\Z/4\) has two automorphisms, the trivial one and
the automorphism \(x\mapsto -x\).  Use this to identify
\(\gamma\colon G\to\Z/2\) with a homomorphism to
\(\Aut(\Z/4) \cong \Z/2\) and form the semidirect product group
\[
  G' \defeq \Z/4 \rtimes_\gamma G.
\]
Let~\([k]\) also denote the image of \([k]\in \Z/4\) in~\(G'\).  If
\(g\in G\subseteq G'\), then \([1] g = g [1]\) if \(\gamma(g)=0\) and
\([1] g = g [3]\) if \(\gamma(g)=1\).  This implies \([2] g = g [2]\)
for all \(g\in G\).  So~\([2]\) is a central involution in~\(G'\).

\begin{example}
  \label{exa:G-gamma_equivariant}
  If~\(\gamma\) is trivial, then a \((G,\gamma)\)-equivariant complex
  vector bundle is the same as a \(G\)\nb-equivariant \(\C\)\nb-vector
  bundle.  In this case, \(G' = G\times \Z/4\).

  Let \(G = \Z/2\) and \(\gamma=\Id_{\Z/2}\).  Then a
  \((G,\gamma)\)-equivariant complex vector bundle is exactly the same
  as a ``real'' vector bundle as
  in~\cite{Bakuradze-Meyer:Iso_stable_iso}, with \([1]\in\Z/2\) giving
  the ``real'' involution on the total space of the bundle.  The
  resulting group~\(G'\) is isomorphic to the dihedral group~\(D_8\)
  with eight elements: the isomorphism maps the normal subgroup
  \(\Z/4 \subseteq G'\) onto the rotation subgroup in~\(D_8\) and it
  maps the generator of~\(G\) to a reflection in~\(D_8\).
\end{example}

\begin{proposition}
  \label{pro:twisted_complex_vector_bundle}
  Let \(X\) be a \(G'\)\nb-space where the subgroup~\(\Z/4\) acts
  trivially.  Then a \(G'\)\nb-equivariant \(\R\)\nb-vector bundle
  over~\(X\) with the extra property that~\([2]\) acts by
  multiplication by~\(-1\) in each fibre is the same as a
  \((G,\gamma)\)-equivariant \(\C\)\nb-vector bundle, and a
  \(G\)\nb-equivariant \(\C\)\nb-vector bundle map is the same as a
  \(G'\)\nb-equivariant \(\R\)\nb-vector bundle map.
\end{proposition}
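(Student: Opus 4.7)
The plan is to extend the isomorphism of categories from Proposition~\ref{pro:C-vector_bundle_rewrite} by layering the \(G\)\nb-action on top of the \(\Z/4\)\nb-action, and to check that the semidirect product relations in~\(G'\) translate exactly into the \(\C\)\nb-linearity/conjugate-linearity distinction in Definition~\ref{def:G-gamma_equivariant}.

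First, starting with a \(G'\)\nb-equivariant \(\R\)\nb-vector bundle~\(E\to X\) in which~\([2]\) acts by \(-1\) on each fibre, I restrict the action to the subgroup \(\Z/4\subseteq G'\).  Since~\(\Z/4\) acts trivially on~\(X\) by assumption, Proposition~\ref{pro:C-vector_bundle_rewrite} turns~\(E\) into a \(\C\)\nb-vector bundle, where scalar multiplication by~\(\ima\) is the action of \([1]\in\Z/4\).  I then declare the residual \(G\)\nb-action on~\(E\) to be the candidate \((G,\gamma)\)\nb-equivariant structure, and verify \(\C\)\nb-linearity/conjugate-linearity as follows.  For \(g\in G\) and \(v\in E\),
\[
  g\cdot(\ima v) = g\cdot([1]\cdot v) = (g[1])\cdot v.
\]
The commutation relations recalled just before Example~\ref{exa:G-gamma_equivariant} give \(g[1]=[1]g\) when \(\gamma(g)=[0]\) and \(g[1]=[3]g\) when \(\gamma(g)=[1]\); substituting and using that~\([3]\) acts as~\(-\ima=\conj\ima\) yields \(g\cdot(\ima v)=\ima(g\cdot v)\) or \(g\cdot(\ima v)=\conj\ima(g\cdot v)\), respectively.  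Extending \(\R\)\nb-linearly in \(\lambda=a+b\ima\in\C\) gives the required identity \(g\cdot(\lambda v)=\lambda(g\cdot v)\) or \(g\cdot(\lambda v)=\conj\lambda(g\cdot v)\).

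Conversely, given a \((G,\gamma)\)-equivariant \(\C\)\nb-vector bundle \(E\to X\), I define a \(G'\)\nb-action on the underlying \(\R\)\nb-vector bundle by letting \([1]\in\Z/4\) act as multiplication by~\(\ima\) and letting \(G\subseteq G'\) act by its given action.  Since every element of \(G'=\Z/4\rtimes_\gamma G\) has a unique expression \([k]g\) with \([k]\in\Z/4\) and \(g\in G\), to see that this prescription yields a group action it suffices to check the defining semidirect-product relations \(g[1]g^{-1}=[1]\) for \(\gamma(g)=[0]\) and \(g[1]g^{-1}=[3]\) for \(\gamma(g)=[1]\); these are exactly the \(\C\)\nb-linearity and conjugate-linearity assumptions on~\(g\), turned around.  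By construction, \(\Z/4\) acts trivially on~\(X\) and \([2]\) acts by~\(-1\) in each fibre.  A direct check shows that the two assignments are inverse to each other.

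For morphisms, an \(\R\)\nb-linear fibrewise map \(f\colon E\to F\) is \(\C\)\nb-linear if and only if it commutes with the action of \([1]\), and it is \(G\)\nb-equivariant if and only if it commutes with every \(g\in G\); together, these are equivalent to \(G'\)\nb-equivariance, because \(\Z/4\) and~\(G\) generate~\(G'\).  I anticipate no serious obstacle: the whole argument is bookkeeping on the commutation relations of the semidirect product, with Proposition~\ref{pro:C-vector_bundle_rewrite} doing the non-trivial categorical work.  The only point requiring a little care is keeping the two cases \(\gamma(g)=[0]\) and \(\gamma(g)=[1]\) straight, and confirming that~\([3]\) acts as~\(\conj\ima\) rather than~\(\ima\) once one unfolds the identification of~\([2]\) with~\(-1\).
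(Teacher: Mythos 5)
Your proposal is correct and follows essentially the same route as the paper's proof: define the complex structure by letting \([1]\in\Z/4\) act as~\(\ima\) (via Proposition~\ref{pro:C-vector_bundle_rewrite}) and translate the semidirect-product commutation relations into the \(\C\)\nb-linearity versus conjugate-linearity of the \(G\)\nb-action, with the morphism statement following from the fact that \(\Z/4\) and~\(G\) generate~\(G'\). Your version merely spells out the bookkeeping (including the check that \([3]\) acts as \(-\ima\)) that the paper leaves as a direct computation.
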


\begin{proof}
  We define the complex structure on~\(E\) by letting the imaginary
  unit act like \([1] \in G'\).  This defines a \(\C\)\nb-vector
  bundle by Proposition~\ref{pro:C-vector_bundle_rewrite}.  The
  \(G'\)\nb-action is the same as this complex structure together with
  an action of~\(G\) by \(\R\)\nb-linear maps that satisfy
  \(g(\ima v) = \ima g(v)\) if \(\gamma(g)=[0]\) and
  \(g(\ima v) = -\ima g(v)\) if \(\gamma(g)=[1]\).  This says that~\(g\)
  acts by a \(\C\)\nb-linear map if \(\gamma(g)=[0]\) and by a
  conjugate-linear map if \(\gamma(g)=[1]\).  A direct computation shows
  that an \(\R\)\nb-vector bundle map is \(G\)\nb-equivariant and
  \(\C\)\nb-linear if and only if it is \(G'\)\nb-equivariant.
\end{proof}

Together with Example~\ref{exa:G-gamma_equivariant}, the proposition
shows that \(G\)\nb-equivariant complex and ``real'' vector bundles
may be treated as \(G'\)\nb-equivariant \(\R\)\nb-vector bundles with
some extra properties.  Namely, a certain subgroup \(G_0\subseteq G'\)
acts trivially on the base of the bundle and a certain element
\(t\in G_0\) acts as multiplication by~\(-1\) in each fibre.  It is
clear that the extra properties on these vector bundles are inherited
by direct sums and direct summands of vector bundles over the same
base space.  Therefore, for our problems of finding embeddings and
isomorphisms between equivariant vector bundles, the extra conditions
have no effect.

\begin{example}
  \label{exa:quaternion_group}
  This example explains how to treat ``quaternionic'' vector bundles
  (see~\cite{Bakuradze-Meyer:Iso_stable_iso}).  Let \(G = \Z/4\) and
  let \(\gamma\colon \Z/4\to\Z/2\) be the canonical projection.  By
  definition, a ``quaternionic'' vector bundle is the same as a
  \((G,\gamma)\)-equivariant complex vector bundle with the extra
  property that \([2]\in \Z/4\) acts trivially on the base space~\(X\)
  and by multiplication by~\(-1\) in the fibres of the bundle.  When
  all of~\(G\) acts trivially on the base, then this is the same as an
  \(\Quat\)\nb-vector bundle.  Now turn a \((G,\gamma)\)-equivariant
  vector bundle into a \(G'\)\nb-equivariant \(\R\)\nb-vector bundle
  as above.  The group~\(G'\) contains two copies of \(\Z/4\), and
  \([2]\) in each copy acts trivially on the base and by
  multiplication by~\(-1\) in each fibre on the total space.  So the
  action of~\(G'\) drops down to an action of the quotient group where
  these two elements of~\(G'\) are identified.  This group has eight
  elements and the presentation
  \[
    \langle a,b\mid a^4 = 1,\ b^2 = a^2,\ b^{-1} a b = a^{-1}\rangle
  \]
  where \(a\) and~\(b\) are the generators of the two copies
  of~\(\Z/4\).  This gives the quaternion group
  \(Q_8 = \{\pm1,\pm \ima, \pm j, \pm k\} \subseteq \Quat\).  Thus a
  ``quaternionic'' vector bundle over a space~\(X\) with involution is
  the same as a \(Q_8\)\nb-equivariant vector bundle over~\(X\) with
  the extra property that the generators \(a\) and~\(b\) act
  trivially and by the given involution on the base~\(X\),
  respectively, and \(a^2 = b^2\) acts as multiplication by~\(-1\).
\end{example}

For context, we recall that when~\(X\) is a compact space with
involution, then the Grothendieck group of the monoid of ``real''
vector bundles over~\(X\) is Atiyah's \(\mathrm{KR}^0(X)\), whereas
the Grothendieck group of the monoid of ``quaternionic'' vector
bundles over~\(X\) is Atiyah's \(\mathrm{KR}^4(X)\) (see
\cite{Atiyah:K_Reality}).

We have now identified several important categories of vector bundles
with a category of equivariant real vector bundles with the extra
property that certain elements act trivially on the base and a
particular element acts as multiplication by~\(-1\).  We encode the
latter property in a definition:

\begin{definition}
  Let~\(G\) be a compact group and let \(t\in G\) be an element of
  order~\(2\).  A \(G\)\nb-equivariant \(\R\)\nb-vector bundle over a
  space~\(X\) is called \emph{odd} if \(t\) acts as multiplication
  by~\(-1\) in each fibre; this forces~\(t\) to act trivially on the
  base.
\end{definition}

In particular, a representation \(\pi\colon G\to \Gl(V)\) on a vector
space~\(V\) is called odd if \(\pi(t) = -1\).  Any representation is a
direct sum of irreducible representations by Maschke's Theorem.  Since
direct sums and direct summands of odd representations remain odd, any
odd representation of~\(G\) is a direct sum of odd, irreducible
representations.  In some cases, there is a unique irreducible odd
representation.  Then a trivial odd \(G\)\nb-equivariant vector bundle
is determined uniquely by its rank.  This fact is a key prerequisite
for some of the results in~\cite{Bakuradze-Meyer:Iso_stable_iso}, in
particular, about finding trivial direct summands.

\begin{proposition}
  \label{pro:unique_odd}
  For the following pairs \((G,t)\), there is a unique odd irreducible
  representation:
  \begin{itemize}
  \item \((\Z/2,[1])\);
  \item \((\Z/4,[2])\);
  \item the dihedral group~\(D_8\) with \(t\in D_8\) being rotation
    by~\(\pi\);
  \item the quaternion group \(Q_8\subseteq \Quat\) with
    \(t=-1 \in\Quat\) or \(t=a^2=b^2\) in the presentation in
    Example~\textup{\ref{exa:quaternion_group}}.
  \end{itemize}
\end{proposition}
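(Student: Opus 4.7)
The plan is to enumerate the irreducible real representations of each of the four groups and, for each, compute the scalar by which $\pi(t)$ acts. A preliminary observation streamlines the bookkeeping: $t^2 = 1$ and $t$ is central in all four groups listed, so by Schur's lemma, on any irreducible real representation $V$ the operator $\pi(t)$ lies in the commutant $\mathrm{End}_G(V)$, which is a division algebra over~$\R$; in each of $\R$, $\C$, and~$\Quat$ the only solutions of $x^2 = 1$ are $\pm 1$, so $\pi(t) \in \{\pm 1\}$ on every irreducible. Thus ``odd'' means precisely $\pi(t) = -1$, and the problem reduces to counting.

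For the two cyclic cases the real irreducible representations are immediate. For $(\Z/2,[1])$ there are two, the trivial and the sign representation, and only the sign representation is odd. For $(\Z/4,[2])$ there are three: the trivial, the sign representation $[1]\mapsto -1$, and the $2$-dimensional rotation representation on~$\R^2$ coming from $[1]\mapsto \ima$. Under $t=[2]$ these act as $1$, $1$, and~$-1$ respectively, giving a unique odd irreducible.

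For $D_8$ and~$Q_8$ I would argue via the center. In each of these groups $t$ generates the center, and the quotient is isomorphic to $\Z/2\times\Z/2$. Thus the irreducible representations on which $\pi(t)=1$ are exactly the four one-dimensional representations factoring through this quotient. Comparing with $\abs{G}=8$ and the usual sum-of-squares relation on the regular representation, there is exactly one remaining complex irreducible representation, necessarily $2$-dimensional and with $\pi(t)=-1$. For~$D_8$ this is the standard representation on~$\R^2$, already defined over~$\R$, yielding a unique odd irreducible real representation of real dimension~$2$. For~$Q_8$ the remaining complex representation is of quaternionic type, realised as left multiplication of $Q_8 \subseteq \Quat$ on~$\Quat$ viewed as a right $\C$-module; its realification is the $4$-dimensional real representation of~$Q_8$ on~$\Quat$ by left multiplication.

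The one delicate point, and the place I would focus care, is the $Q_8$ case: one must verify that the realification of the $2$-dimensional complex irreducible does not split into two real pieces, so that uniqueness survives passage from~$\C$ to~$\R$. This is exactly what the quaternionic Schur indicator prevents, and it can be checked directly by noting that $\mathrm{End}_{Q_8}(\Quat) = \Quat$ (via right multiplication), a division algebra, so $\Quat$ is irreducible as a real $Q_8$-representation. Once this is in place, the counting argument above delivers uniqueness in all four cases.
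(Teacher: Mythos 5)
Your proof is correct, and for the cyclic cases it coincides with the paper's. Where you differ is in the treatment of $D_8$ and $Q_8$: the paper enumerates the irreducible \emph{real} representations directly (the four characters killing $t$, plus the standard $2$\nb-dimensional representation for $D_8$, respectively the restriction of $Q_8\hookrightarrow\Quat$ for $Q_8$) and certifies completeness by the real count $8=4\cdot 1+2^2$, whereas you first classify the \emph{complex} irreducibles (four linear ones through $G/\langle t\rangle\cong\Z/2\times\Z/2$ plus one $2$\nb-dimensional odd one) and then descend to $\R$, checking in the $Q_8$ case that the realification $\Quat$ stays irreducible because $\Endo_{Q_8}(\Quat)\cong\Quat$ is a division algebra. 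Both routes are sound; your preliminary Schur-lemma remark that $\pi(t)\in\{\pm1\}$ on every irreducible (since $\pi(t)$ lies in a real division algebra and squares to $1$) is a nice touch that the paper leaves implicit. The complex-then-real route does lean on the standard correspondence between real irreducibles and their complexifications to rule out further odd real irreducibles (every odd real irreducible complexifies to copies of the unique odd complex one, and then is pinned down by your irreducibility check of the realification); you gesture at this rather than spell it out, but it is routine. The paper's direct real count is slightly shorter and dovetails with its later Clifford-algebra argument in Proposition~\ref{pro:Cliff_groups}, where the same uniqueness statements reappear as the statement that $\Cliff_{p,q}$ has a unique simple module.
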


\begin{proof}
  First let \(G=\Z/2\) and \(t=[1]\).  The group~\(G\) has two
  irreducible representations, namely, the trivial character and the
  sign character \([k]\mapsto (-1)^k\).  The latter is the only odd
  one.  Next, let \(G=\Z/4\) and \(t=[2]\).  This group has exactly
  three irreducible real representations, namely, the trivial
  character, the \(\R\)\nb-valued character \([k]\mapsto (-1)^k\), and
  the \(\C\)\nb-valued character \([k] \mapsto \ima^k\).  Only the
  latter is odd.

  Next, let \(G=D_8\).  Then the rotation by~\(\pi\) is the only
  nontrivial central element in~\(D_8\).  The group
  \(D_8/\langle t\rangle = \Z/2\times \Z/2\) has four \(\R\)\nb-valued
  characters \(D_8\to \{\pm1\}\), giving four \(\R\)\nb-valued
  characters that all kill the element~\(t\), so that they are not
  odd.  The standard representation of~\(D_8\) in~\(\R^2\) is a
  \(2\)\nb-dimensional irreducible real representation, which is
  clearly odd.  These are all irreducible representations because the
  group has order \(8 = 4\cdot 1 + 2^2\).  Thus the only odd
  irreducible representation is the \(2\)\nb-dimensional one.

  Finally, let \(G=Q_8\).  Once again, \(t\) as above is the unique
  nontrivial central element and the quotient \(Q_8/\langle t\rangle\)
  is the Klein Four Group, giving four \(\R\)\nb-valued characters
  that all kill~\(t\).  The canonical inclusion
  \(Q_8\hookrightarrow \Quat\) gives another irreducible
  representation of quaternionic type.  Since \(8 = 4\cdot 1 + 2^2\),
  these are all irreducible real representations.  So the quaternionic
  representation of~\(Q_8\) is its only irreducible odd
  representation.
\end{proof}

The theory of real Clifford algebras provides many other groups with a
unique irreducible odd representation.  For \(p,q\ge0\), let
\(\Cliff_{p,q}\) be the Clifford algebra over~\(\R\) generated by
anticommuting elements \(e_1,\dotsc,e_{p+q}\) with \(e_j^2=1\) for
\(j=1,\dotsc,p\) and \(e_j^2=-1\) for \(j=p+1,\dotsc,p+q\).  Let
\(G_{p,q} \subseteq \Cliff_{p,q}\) be the subset of elements of the form
\(\pm e_{i_1} e_{i_2} \dotsm e_{i_\ell}\) with \(\ell\ge0\) and
\(1 \le i_1 < i_2 < \dotsb < i_\ell \le p+q\).  The relations of
\(\Cliff_{p,q}\) imply that~\(G_{p,q}\) is a subgroup.  It has
\(2^{p+q+1}\) elements.  Let \(t = -1 \in G_{p,q}\).  This is a
central involution in~\(G_{p,q}\).  Recall that \([k]_8\) denotes the
class of \(k\in\Z\) in \(\Z/8\).

\begin{proposition}
  \label{pro:Cliff_groups}
  Let \(p,q\ge0\) be such that
  \([p-q]_8 \in \{[0],[2],[3],[4],[6],[7]\}\).  Then~\(G_{p,q}\) has a
  unique irreducible odd representation.
\end{proposition}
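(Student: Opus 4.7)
The plan is to reduce the counting of irreducible odd representations of~\(G_{p,q}\) to the classical classification of real Clifford algebras~\(\Cliff_{p,q}\).  Let \(t=-1\in G_{p,q}\).  A real representation of~\(G_{p,q}\) is odd exactly when~\(t\) acts as multiplication by~\(-1\), equivalently, when the element \(1+t\in\R[G_{p,q}]\) acts as zero.  Thus odd representations of~\(G_{p,q}\) are the same as modules over the quotient \(\R\)\nb-algebra \(A\defeq \R[G_{p,q}]/\langle 1+t\rangle\).

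The main step is to establish an isomorphism of real algebras
\[
  A \congto \Cliff_{p,q}.
\]
The inclusion \(G_{p,q}\hookrightarrow \Cliff_{p,q}\) extends to an \(\R\)\nb-algebra map \(\R[G_{p,q}]\to \Cliff_{p,q}\).  Since this map sends~\(t\) to the scalar~\(-1\) inside the Clifford algebra, it kills~\(1+t\) and so factors through~\(A\).  The induced map \(A\to\Cliff_{p,q}\) is surjective because its image contains the Clifford generators \(e_1,\dotsc,e_{p+q}\).  Injectivity follows from a dimension count: the \(2^{p+q+1}\) elements \(\pm e_{i_1}\dotsm e_{i_\ell}\) of~\(G_{p,q}\) come in pairs \(\{g,tg\}\) with \([tg]=-[g]\) in~\(A\), so~\(A\) has real dimension~\(2^{p+q}\), which matches \(\dim_\R \Cliff_{p,q}=2^{p+q}\).

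With this isomorphism in hand, irreducible odd representations of~\(G_{p,q}\) correspond bijectively to irreducible real \(\Cliff_{p,q}\)\nb-modules, and the proposition reduces to the familiar Bott classification of real Clifford algebras: \(\Cliff_{p,q}\) is a \emph{simple} real algebra, isomorphic to a full matrix algebra over \(\R\), \(\C\) or~\(\Quat\), precisely when \([p-q]_8\in\{[0],[2],[3],[4],[6],[7]\}\); in the remaining cases \([p-q]_8\in\{[1],[5]\}\) it decomposes as a direct sum of two such matrix algebras.  A finite-dimensional simple real algebra has a unique irreducible real module up to isomorphism, which gives the unique irreducible odd representation of~\(G_{p,q}\).

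The only non-routine ingredient is the algebra isomorphism \(A\cong\Cliff_{p,q}\); the main obstacle is the dimension count on the group-algebra side, after which the conclusion is a direct appeal to the tabulated Clifford classification.
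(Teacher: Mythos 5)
Your proof is correct, and it follows the same overall strategy as the paper -- identify odd representations of \(G_{p,q}\) with modules over \(\Cliff_{p,q}\) and then quote the mod~\(8\) classification of real Clifford algebras -- but the reduction step is implemented differently. The paper first proves that \(G_{p,q}\) is generated by the Clifford generators (which requires excluding \((p,q)=(1,0)\) via the hypothesis and treating \((p,q)=(0,0)\) separately), and then argues that an odd group representation extends to, and is determined by, a \(\Cliff_{p,q}\)-module structure; it then reduces to small cases using \(\Cliff_{p+1,q+1}\cong\Mat_2(\Cliff_{p,q})\). You instead exhibit a direct algebra isomorphism \(\R[G_{p,q}]/\langle 1+t\rangle \cong \Cliff_{p,q}\), which is valid for \emph{all} \((p,q)\) and so avoids both the generation lemma and the special case \((0,0)\); the hypothesis on \([p-q]_8\) then enters only at the very end, through simplicity of \(\Cliff_{p,q}\). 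One small remark on your dimension count: the pairing \([tg]=-[g]\) by itself only shows \(\dim_\R A\le 2^{p+q}\) (a priori the quotient could be smaller), but since you have already established surjectivity onto \(\Cliff_{p,q}\), the upper bound forces equality and hence injectivity, so the argument is complete as written; alternatively one can note \(\R[G_{p,q}]=\tfrac{1+t}{2}\R[G_{p,q}]\oplus\tfrac{1-t}{2}\R[G_{p,q}]\) with summands of equal dimension. Your route is slightly cleaner and more uniform; the paper's is more elementary in that it cites only the low-dimensional Clifford algebras plus \((1,1)\)-periodicity rather than the full statement that \(\Cliff_{p,q}\) is simple exactly when \([p-q]_8\notin\{[1],[5]\}\), though the two inputs are of course equivalent.
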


\begin{proof}
  We claim that~\(G_{p,q}\) is generated as a group by the Clifford
  generators \(e_1,\dotsc,e_{p+q}\) unless \((p,q)=(0,0)\).  The
  Clifford generators belong to~\(G_{p,q}\) by definition, and the
  claim follows once we are able to write~\(-1\) as a product of them.
  This is trivial if \(q\ge1\) because then \(e_{p+q}^2 = -1\).  If
  \(q=0\), then we ruled out \(p=0\), and \(p=1\) is forbidden because
  then \([p-q]_8 = [1]_8\).  So \(p\ge2\).  Then
  \(e_1 e_2 e_1 e_2 = -e_1^2 e_2^2 = -1\).  Thus~\(-1\) belongs to the
  subgroup generated by \(e_1,\dotsc,e_{p+q}\) in all cases \((p,q)\)
  that satisfy our hypothesis except for \((p,q)=(0,0)\).  If
  \((p,q)=(0,0)\), then \(\Cliff_{0,0}=\R\) and
  \(G_{p,q} = \{\pm1\} \cong \Z/2\) with \(t=[1]_2\), and the claim
  follows from Proposition~\ref{pro:unique_odd}.  So we may disregard
  the case \(p=q=0\) from now on and assume that~\(G_{p,q}\) is
  generated by \(e_1,\dotsc,e_{p+q}\).

  Let~\(\varrho\) be an odd representation of the group~\(G_{p,q}\).
  Thus \(\varrho(-1) = -1\) and
  \(\varrho(e_j)^2 = \varrho(e_j^2) = 1\) for \(j=1\dotsc,p\) and
  \(\varrho(e_j)^2 = \varrho(e_j^2) = -1\) for \(j=p+1\dotsc,p+q\).
  Thus the linear maps \(\varrho(e_j)\) determine a representation of
  the Clifford algebra~\(\Cliff_{p,q}\).  The representation~\(\varrho\)
  is equal to the restriction of this representation of~\(\Cliff_{p,q}\)
  to \(G_{p,q} \subseteq \Cliff_{p,q}\) because~\(G_{p,q}\) is generated
  by the Clifford generators.  Thus the category of odd group
  representations of~\(G_{p,q}\) is equivalent to the category of
  modules over~\(\Cliff_{p,q}\).  So~\(G_{p,q}\) has a unique irreducible
  representation if and only if there is a unique simple module
  over~\(\Cliff_{p,q}\).

  It remains to determine when~\(\Cliff_{p,q}\) has a unique simple
  module.  Recall that
  \(\Cliff_{p+1,q+1} \cong \Mat_2(\Cliff_{p,q})\).  So
  \(\Cliff_{p+1,q+1}\) has a unique simple module if and only if
  \(\Cliff_{p,q}\) has one.  So the answer depends only on
  \([p-q]_8\).  Recall that \(\Cliff_{0,0} \cong \R\),
  \(\Cliff_{0,1} \cong \C\), \(\Cliff_{0,2} \cong \Quat\),
  \(\Cliff_{0,3} \cong \Quat \oplus \Quat\),
  \(\Cliff_{1,0} \cong \R\oplus \R\),
  \(\Cliff_{2,0} \cong \Mat_2(\R)\),
  \(\Cliff_{3,0} \cong \Mat_2(\C)\),
  \(\Cliff_{4,0} \cong \Mat_2(\Quat)\).
  As a result, \(\Cliff_{p,q}\) has a unique simple module if and only if
  $$[p-q]_8\in \{[0]_8,[2]_8,[3]_8,[4]_8,[6]_8,[7]_8\}.$$
\end{proof}

Proposition~\ref{pro:Cliff_groups} generalises
Proposition~\ref{pro:unique_odd} because \(\Z/2\cong G_{0,0}\),
\(\Z/4\cong G_{0,1}\), \(Q_8 \cong G_{0,2}\), and
\(D_8 \cong G_{1,1} \cong G_{2,0}\).  The groups in
Proposition~\ref{pro:Cliff_groups} all have order~\(2^k\) for some
\(k\in\N\).  In addition, their centre \(Z(G_{p,q})\) is \(\{1,-1\}\)
and the quotient \(G_{p,q}/Z(G_{p,q})\) is \((\Z/2)^{p+q}\) because
all the generators are involutions and commute up to signs.

\section{The main results}
\label{sec:main}

In this section, we formulate our main results, which are equivariant
\(\K\)\nb-theory versions of the results
in~\cite{Bakuradze-Meyer:Iso_stable_iso}.  We will later assume that
our vector bundles are odd with respect to a suitable central
involution.  Since this does not affect the vector bundle maps, our
main theorems below immediately imply results for odd vector bundles.
Implicitly, restricting to odd vector bundles means that only odd
irreducible representations occur in the conditions of our theorems.

Let~\(G\) be a compact Lie group.  Let \((X,A)\) be a relative
\(G\)\nb-CW-complex.  This means that~\(X\) is built from a closed
invariant subspace \(A\subseteq X\) by successively attaching
\(G\)\nb-equivariant cells of the form \(G/H \times \Disk^\ell\) for
subgroups \(H\subseteq G\) and \(\ell\in \N\).  For instance, if~\(X\)
is a smooth manifold with a smooth action of a compact group~\(G\),
then it admits an equivariant triangulation
by~\cite{Illman:Equivariant_triangulations}, and this makes it
homeomorphic to a \(G\)-CW-complex.

A cell of type \(G/H \times \Disk^\ell\) occurs in~\((X,A)\) if and
only if there is a point \(x\in X\setminus A\) whose stabiliser group
is exactly equal to~\(H\).  If the subgroups \(H\) and~\(L\) are
conjugate, then \(G/H \times \Disk^\ell \cong G/L \times \Disk^\ell\),
so that it does not really matter whether we use \(H\) or~\(L\) to
describe our equivariant cells.  The dimension~\(d_H\) is defined as
the supremum of all~\(\ell\) such that a cell of type
\(G/L \times \Disk^\ell\) with~\(L\) conjugate to~\(H\) occurs in the
decomposition of~\(X\); this is defined to be~\(-\infty\) if no such
cells exist.  Let
\[
  X^H \defeq \setgiven{x\in X}{h\cdot x = x \text{ for all }h\in H}.
\]
Let~\(X^{(H)}\) be the set difference of~\(X^H\) and~\(X^L\) for all
subgroups \(L\subseteq G\) with \(H\subsetneq L\); this is the set of
points whose stabiliser group is exactly~\(H\).

Let \(\Field\in \{\R,\C,\Quat\}\) and let \(p\colon E\to X\) be a
\(G\)\nb-equivariant \(\Field\)\nb-vector bundle over~\(X\).  Let
\(H\subseteq G\) be a subgroup.  Let \(x\in X^H\).  Then the
\(G\)\nb-action on~\(E\) restricts to an \(H\)\nb-action on the fibre
\(E_x \defeq p^{-1}\{x\}\).  Let~\(\hat{H}_\Field\) denote the set of
isomorphism classes of irreducible representations of~\(H\) on
\(\Field\)\nb-vector spaces.  For \(\varrho\colon G\to \Gl(V)\)
in~\(\hat{H}_\Field\), the commutant \(\Endo(V)\) is one of the
skew-fields \(\R\), \(\C\) or \(\Quat\).  If \(\Field=\R\), then all
three cases can occur, whereas if \(\Field\in\{\C,\Quat\}\), then only
\(\Endo(V)=\Field\) is possible.  Let \(c_\varrho \in \{1,2,4\}\) be
the dimension of \(\Endo(V)\) as an \(\R\)\nb-vector space.  For
\(x\in X^H\) and \(\varrho\in \hat{H}_\Field\), let \(m_\varrho(E_x)\)
be the multiplicity of~\(\varrho\) in this representation of~\(H\)
on~\(E_x\).  The function \(X^H \to \N\), \(x\mapsto m_\varrho(E_x)\),
is locally constant.

The following two theorems are the relative versions of our main
results.  For most applications, the less technical absolute versions
in the two corollaries below will suffice.

\begin{theorem}
  \label{the:summand}
  Let~\(G\) be a compact Lie group and let
  \(\Field\in \{\R,\C,\Quat\}\).  Let \((X,A)\) be a relative
  \(G\)\nb-CW-complex and let \(q\colon V\to X\) and
  \(p\colon E\to X\) be \(G\)\nb-equivariant \(\Field\)\nb-vector
  bundles over~\(X\).  Let \(\iota\colon V|_A \to E|_A\) be an
  injective \(G\)\nb-equivariant \(\Field\)\nb-vector bundle map.
  Assume that for all subgroups \(H\subseteq G\), all
  \(x\in X^{(H)}\setminus A^{(H)}\) and all irreducible
  \(\Field\)\nb-representations \(\varrho\in \hat{H}_\Field\) with
  \(m_\varrho(V_x) \neq 0\), the following inequality holds:
  \[
    m_\varrho(E_x) \ge
    m_\varrho(V_x) + \left\lceil \frac{d_H+1-c_\varrho}{c_\varrho} \right\rceil.
  \]
  Then~\(\iota\) extends to an injective \(G\)\nb-equivariant
  \(\Field\)\nb-vector bundle map \(V \to E\).
\end{theorem}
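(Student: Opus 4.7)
The plan is to reduce the theorem to a cell-by-cell extension problem and then to apply the known connectivity of the real, complex and quaternionic Stiefel manifolds.

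First I would induct over the equivariant cells of \((X,A)\). At each stage, \(\iota\) is already extended to an injective bundle map on some closed \(G\)\nb-invariant subspace \(Y\supseteq A\), and I must extend across a single cell of type \(G/H\times(\Disk^\ell,\Sphere^{\ell-1})\), attached along some \(\phi\colon G/H\times \Sphere^{\ell-1}\to Y\) with \(\ell\le d_H\); cells of equal dimension are disjoint and may be handled independently.

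Second, I choose an extension \(\tilde\phi\colon G/H\times\Disk^\ell\to X\) of~\(\phi\). Since \(\Disk^\ell\) is contractible, the pulled-back bundles \(\tilde\phi^\ast V\) and \(\tilde\phi^\ast E\) over \(G/H\times\Disk^\ell\) are \(G\)\nb-equivariantly isomorphic to bundles pulled back from~\(G/H\), hence of the form \(G\times_H \tilde V\) and \(G\times_H \tilde E\) for certain \(H\)\nb-representations \(\tilde V\) and~\(\tilde E\). For any point \(x\) in the image of the open cell, the stabiliser is conjugate to~\(H\), the point lies in \(X^{(H)}\setminus A^{(H)}\) (after adjusting by conjugation), and the isotypic multiplicities satisfy \(m_\varrho(\tilde V)=m_\varrho(V_x)\) and \(m_\varrho(\tilde E)=m_\varrho(E_x)\). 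A \(G\)\nb-equivariant bundle map between induced bundles is the same as an \(H\)\nb-equivariant map between their representations, so the extension problem becomes: given
\[
  f\colon \Sphere^{\ell-1}\longrightarrow \Hom_H^{\mathrm{inj}}(\tilde V,\tilde E),
\]
extend~\(f\) to~\(\Disk^\ell\), where \(\Hom_H^{\mathrm{inj}}\) denotes the subspace of injective \(H\)\nb-equivariant \(\Field\)\nb-linear maps.

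Third, by Schur's lemma every \(H\)\nb-equivariant linear map respects the isotypic decomposition, so \(\Hom_H^{\mathrm{inj}}(\tilde V,\tilde E)\) is homeomorphic to the product over \(\varrho\in\hat H_\Field\) of the spaces of injective \(D_\varrho\)\nb-linear maps \(D_\varrho^{m_\varrho(\tilde V)}\hookrightarrow D_\varrho^{m_\varrho(\tilde E)}\), where \(D_\varrho=\Endo_H(\varrho)\) has real dimension~\(c_\varrho\). Factors with \(m_\varrho(\tilde V)=0\) are points and may be ignored. Each remaining factor is homotopy equivalent to a Stiefel manifold, and the classical computation over \(\R\), \(\C\) and~\(\Quat\) shows it to be \((c_\varrho(m_\varrho(\tilde E)-m_\varrho(\tilde V))+c_\varrho-2)\)\nb-connected. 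Consequently the product is \((\ell-1)\)\nb-connected whenever
\[
  m_\varrho(\tilde E)\;\ge\;m_\varrho(\tilde V)+\bigl\lceil(\ell+1-c_\varrho)/c_\varrho\bigr\rceil
\]
for every relevant~\(\varrho\); since \(\ell\le d_H\) and the right-hand side is monotone in~\(\ell\), this is implied by the hypothesis of the theorem, so~\(f\) is null-homotopic and extends over~\(\Disk^\ell\), closing the induction.

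The main obstacle will be the equivariant bookkeeping in the second step: verifying that the trivialisation over each cell really identifies the isotypic multiplicities with those appearing in the hypothesis, and that the extension problem genuinely decouples into a product of Stiefel-type problems over the various \(D_\varrho\). The connectivity input in the third step is classical and requires no new ideas.
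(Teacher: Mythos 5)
Your proposal is correct and follows essentially the same route as the paper: induction over the equivariant cells, reduction of the $G$\nb-equivariant problem on a cell $G/H\times\Disk^\ell$ to an $H$\nb-equivariant one over the disk, a Schur-lemma isotypic decomposition into $\Field_\varrho$\nb-linear pieces, and a connectivity bound matching the ceiling in the hypothesis. The only cosmetic difference is the final step, where you invoke the connectivity of the Stiefel manifolds of injective $\Field_\varrho$\nb-linear maps directly, while the paper proves the equivalent extension statement (Lemma~\ref{lem:extend_injection}) by peeling off one nowhere vanishing section at a time, resting on the same computation $\pi_{k-1}(\Sphere^{c r-1})=0$.
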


\begin{theorem}
  \label{the:stable_iso}
  Let~\(G\) be a compact Lie group.  Let \((X,A)\) be a relative
  \(G\)\nb-CW-complex and let \(\Field\in \{\R,\C,\Quat\}\).  Let
  \(q\colon V\to X\) and \(p_j\colon E_j\to X\) for \(j=1,2\) be
  \(G\)\nb-equivariant \(\Field\)\nb-vector bundles over~\(X\).  Let
  \(\varphi_A\colon E_1|_A \congto E_2|_A\) and
  \(\varphi_V\colon E_1 \oplus V \congto E_2 \oplus V\) be
  \(G\)\nb-equivariant \(\Field\)\nb-vector bundle isomorphisms, such
  that \(\varphi_V|_A = \varphi_A \oplus \Id_V\).  Assume that for all
  subgroups \(H\subseteq G\), all \(x\in X^{(H)}\setminus A^{(H)}\)
  and all irreducible \(\Field\)\nb-representations \(\varrho\in \hat{H}_\Field\) with
  \(m_\varrho(V_x) \neq 0\), the following inequality holds:
  \[
    m_\varrho((E_1)_x) \ge \left\lceil \frac{d_H+2-c_\varrho}{c_\varrho} \right\rceil.
  \]
  Then there is a \(G\)\nb-equivariant \(\Field\)\nb-vector bundle
  isomorphism \(\varphi\colon E_1 \congto E_2\) such that
  \(\varphi|_A = \varphi_A\).
\end{theorem}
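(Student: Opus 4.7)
The plan is to reduce Theorem~\ref{the:stable_iso} to Theorem~\ref{the:summand} via a cylinder construction, which converts the problem of unstabilising an isomorphism into the problem of extending an injective bundle map over a relative \(G\)-CW-complex of one higher dimension.

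First, I form the cylinder \(\tilde X \defeq X \times [0,1]\) with the trivial \(G\)\nb-action on the interval factor, and set \(\tilde A \defeq (X \times \{0,1\}) \cup (A \times [0,1])\). Then \((\tilde X, \tilde A)\) is a relative \(G\)-CW-complex whose cells outside \(\tilde A\) are products of cells in \(X \setminus A\) with the open interval \((0,1)\); in particular, the dimension parameters in Theorem~\ref{the:summand} applied to \((\tilde X, \tilde A)\) become \(d_H + 1\), and \(\tilde X^{(H)} \setminus \tilde A^{(H)} = (X^{(H)} \setminus A^{(H)}) \times (0,1)\). I pull back \(V\) and \(E_2\) along the projection \(\tilde X \to X\) to \(G\)-equivariant bundles \(\tilde V\) and \(\tilde E_2\) over \(\tilde X\), and define an injective \(G\)-equivariant \(\Field\)-vector bundle map
\[
  \alpha_0 \colon \tilde V|_{\tilde A} \to (\tilde E_2 \oplus \tilde V)|_{\tilde A}
\]
as follows: on \(X \times \{0\}\), take \(v \mapsto \varphi_V(0, v)\); on \(X \times \{1\}\) and on \(A \times [0,1]\), take the canonical inclusion \(v \mapsto (0, v)\) as the second summand. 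The assumption \(\varphi_V|_A = \varphi_A \oplus \Id_V\) ensures these pieces agree on the overlap \(A \times \{0\}\).

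Next, I apply Theorem~\ref{the:summand} to extend \(\alpha_0\) to an injective bundle map \(\tilde \alpha \colon \tilde V \to \tilde E_2 \oplus \tilde V\) on all of \(\tilde X\). The required multiplicity inequality reduces, after cancelling \(m_\varrho(\tilde V_x)\) from both sides, to
\[
  m_\varrho((E_2)_x) \ge \left\lceil \frac{d_H + 2 - c_\varrho}{c_\varrho} \right\rceil.
\]
This holds because the stable isomorphism \(\varphi_V\) forces \(m_\varrho((E_1)_x) = m_\varrho((E_2)_x)\) for every \(x \in X^{(H)}\) and every \(\varrho \in \hat{H}_\Field\) (since stably isomorphic finite-dimensional \(H\)-representations have equal isotypic multiplicities), while the lower bound assumed for \(E_1\) in the hypotheses of Theorem~\ref{the:stable_iso} is precisely the needed inequality.

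Finally, I form the quotient bundle \(Q \defeq (\tilde E_2 \oplus \tilde V)/\tilde\alpha(\tilde V)\) on \(\tilde X\). At \(t=0\), the composition \(E_1 \hookrightarrow E_1 \oplus V \xrightarrow{\varphi_V} E_2 \oplus V \twoheadrightarrow Q|_{t=0}\) is fibrewise an isomorphism, identifying \(Q|_{t=0}\) with \(E_1\); at \(t=1\), the canonical projection \(E_2 \oplus V \twoheadrightarrow E_2\) identifies \(Q|_{t=1}\) with \(E_2\); and on \(A \times [0,1]\) the map \(\tilde\alpha\) is the canonical inclusion of \(\tilde V\), so \(Q|_{A \times [0,1]}\) is canonically the pullback of \(E_2|_A\), constant along the interval. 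Relative homotopy invariance of \(G\)-equivariant vector bundles over a relative \(G\)-CW-complex then provides a \(G\)-equivariant isomorphism \(Q|_{t=0} \congto Q|_{t=1}\) that is the identity on \(A\); combining with the identifications above yields the desired \(\varphi \colon E_1 \congto E_2\) with \(\varphi|_A = \varphi_A\). The main point requiring care is this last step of bookkeeping: one must verify that the identifications of \(Q|_{t=0}\) with \(E_1\) and of \(Q|_{t=1}\) with \(E_2\) restrict on \(A\) to \(\varphi_A\) and the identity, respectively, so that the homotopy-invariance isomorphism really reassembles to \(\varphi_A\). This follows from tracking \(\varphi_V|_A = \varphi_A \oplus \Id_V\) through the quotient construction.
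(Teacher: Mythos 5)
Your proposal is correct and follows essentially the same route as the paper: pass to the cylinder \(X\times[0,1]\) relative to \(A\times[0,1]\cup X\times\{0,1\}\), extend an injective bundle map from the pullback of \(V\) using Theorem~\ref{the:summand} (the shift from \(d_H+1\) to \(d_H+2\) coming from the extra dimension), take the cokernel, and conclude by homotopy invariance of equivariant vector bundles, with the identity over \(A\) giving \(\varphi|_A=\varphi_A\). The only (harmless) deviation is that you embed into the pullback of \(E_2\oplus V\) rather than \(E_1\oplus V\), which costs you the extra, easily justified observation that \(\varphi_V\) forces \(m_\varrho((E_1)_x)=m_\varrho((E_2)_x)\) at every \(x\in X^{(H)}\), whereas the paper's choice makes the multiplicity hypothesis apply verbatim.
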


In both theorems, only subgroups~\(H\) with
\(X^{(H)} \setminus A^{(H)} \neq\emptyset\) occur.  This happens if
and only if the \(G\)-CW-complex decomposition of~\(X\) contains an
equivariant cell of the form \(G/L \times \Disk^\ell\) for some
\(\ell\in \N\) and a subgroup~\(L\) conjugate to~\(H\).

When \(A=\emptyset\), then the data~\(\iota\) in the theorem contains
no information.  So the following are special cases of the theorems:

\begin{corollary}
  \label{cor:summand_absolute}
  Let~\(G\) be a compact Lie group and let
  \(\Field\in \{\R,\C,\Quat\}\).  Let~\(X\) be a \(G\)\nb-CW-complex
  and let \(q\colon V\to X\) and \(p\colon E\to X\) be
  \(G\)\nb-equivariant \(\Field\)\nb-vector bundles over~\(X\).
  Assume that for all subgroups \(H\subseteq G\), all \(x\in X^{(H)}\)
  and all irreducible \(\Field\)\nb-representations \(\varrho\in \hat{H}_\Field\) with
  \(m_\varrho(V_x) \neq 0\), the following inequality holds:
  \[
    m_\varrho(E_x) \ge
    m_\varrho(V_x) + \left\lceil \frac{d_H+1-c_\varrho}{c_\varrho} \right\rceil.
  \]
  Then there is an injective \(G\)\nb-equivariant \(\Field\)\nb-vector
  bundle map \(V \to E\).
\end{corollary}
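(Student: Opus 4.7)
The plan is to derive this corollary directly as the $A=\emptyset$ specialisation of Theorem~\ref{the:summand}, which I will assume as given in the statement above. First I would set $A \defeq \emptyset$ inside the hypotheses of Theorem~\ref{the:summand}. Since $A^{(H)} = \emptyset$ for every subgroup $H\subseteq G$, the set difference $X^{(H)}\setminus A^{(H)}$ appearing in the theorem collapses to $X^{(H)}$, which is exactly the index set over which the hypothesis of the corollary requires the multiplicity inequality to hold. Thus the numerical condition of the corollary matches the numerical condition of the theorem verbatim.

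Next I would handle the boundary datum $\iota\colon V|_A \to E|_A$. Over $A=\emptyset$ the restrictions $V|_A$ and $E|_A$ are both the empty bundle over the empty base, and there is a unique $G$-equivariant $\Field$-vector bundle map between them, which is vacuously injective. Taking this unique map as~$\iota$ supplies the remaining input to Theorem~\ref{the:summand}.

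Finally, Theorem~\ref{the:summand} produces an injective $G$-equivariant $\Field$-vector bundle map $V\to E$ extending~$\iota$; since the extension condition over the empty set is automatic, this is exactly an injective $G$-equivariant $\Field$-vector bundle map $V\to E$, which is the conclusion of the corollary. There is no substantive obstacle here: the absolute version is just the relative version applied to the empty subcomplex, and all the actual work is packaged into Theorem~\ref{the:summand}, to be carried out separately in Section~\ref{sec:proofs_main}.
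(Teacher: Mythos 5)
Your proposal is correct and coincides with the paper's own treatment: the paper derives Corollary~\ref{cor:summand_absolute} precisely by specialising Theorem~\ref{the:summand} to \(A=\emptyset\), noting that the boundary datum~\(\iota\) then carries no information. Nothing further is needed.
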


\begin{corollary}
  \label{cor:stable_iso_absolute}
  Let~\(G\) be a compact Lie group and let
  \(\Field\in \{\R,\C,\Quat\}\).  Let~\(X\) be a \(G\)\nb-CW-complex
  and let \(q\colon V\to X\) and \(p_j\colon E_j\to X\) for \(j=1,2\)
  be \(G\)\nb-equivariant \(\Field\)\nb-vector bundles over~\(X\).
  Assume that there is an isomorphism
  \(E_1 \oplus V \cong E_2 \oplus V\) of \(G\)\nb-equivariant
  \(\Field\)\nb-vector bundles.  For all subgroups \(H\subseteq G\),
  all \(x\in X^{(H)}\) and all irreducible
  \(\Field\)\nb-representations \(\varrho\in \hat{H}_\Field\) with
  \(m_\varrho(V_x) \neq 0\), assume
  \[
    m_\varrho((E_1)_x) \ge \left\lceil \frac{d_H+2-c_\varrho}{c_\varrho} \right\rceil.
  \]
  Then there is a \(G\)\nb-equivariant \(\Field\)\nb-vector bundle
  isomorphism \(E_1 \cong E_2\).
\end{corollary}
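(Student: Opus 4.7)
The plan is to deduce this corollary directly from Theorem \ref{the:stable_iso} by specialising to $A = \emptyset$. An absolute $G$\nb-CW-complex $X$ is automatically a relative $G$\nb-CW-complex $(X,\emptyset)$, so the theorem applies. With $A = \emptyset$, the restrictions $E_j|_A$ are the empty bundles over the empty space, and there is a unique (empty) $G$\nb-equivariant $\Field$\nb-vector bundle isomorphism $\varphi_A \colon E_1|_A \congto E_2|_A$ between them. The stable isomorphism $E_1 \oplus V \cong E_2 \oplus V$ hypothesised in the corollary provides the isomorphism $\varphi_V$ required by the theorem, and the compatibility condition $\varphi_V|_A = \varphi_A \oplus \Id_V$ holds vacuously.

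Next, I would check that the multiplicity hypothesis of the theorem matches that of the corollary. Since $A = \emptyset$, we have $A^{(H)} = \emptyset$ for every subgroup $H \subseteq G$, so the quantifier ``$x \in X^{(H)} \setminus A^{(H)}$'' in Theorem \ref{the:stable_iso} collapses to ``$x \in X^{(H)}$'', which is exactly the quantifier used in the corollary. The inequality
\[
  m_\varrho((E_1)_x) \ge \left\lceil \frac{d_H+2-c_\varrho}{c_\varrho} \right\rceil
\]
is stated identically in both places, so the hypotheses line up.

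All hypotheses of Theorem \ref{the:stable_iso} are therefore met. The theorem yields a $G$\nb-equivariant $\Field$\nb-vector bundle isomorphism $\varphi\colon E_1 \congto E_2$; the extra condition $\varphi|_A = \varphi_A$ provided by the theorem is vacuous. This is precisely the conclusion of the corollary. There is no genuine obstacle here: the corollary is literally the $A = \emptyset$ instance of the theorem, recorded separately only to spare readers who do not need the relative version from parsing the extra data $\varphi_A$ and the set difference $X^{(H)} \setminus A^{(H)}$.
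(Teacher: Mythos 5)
Your proposal is correct and matches the paper's own treatment: the corollary is stated there as the special case $A=\emptyset$ of Theorem~\ref{the:stable_iso}, with the data $\varphi_A$ and the compatibility condition becoming vacuous exactly as you argue.
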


The physical Interpretation of Corollary~\ref{cor:stable_iso_absolute}
is as follows.  The space~\(X\) is the Brillouin zone, usually a
torus~\(\T^d\) for a \(d\)\nb-dimensional material.  The vector
bundles \(E_1\) and~\(E_2\) are the Bloch bundles of two topological
insulators that have the same class in equivariant \(\K\)\nb-theory.
The points \(x\in X\) with nontrivial stabiliser~\(H_x\) are the
high-symmetry points in the Brillouin zone.  At these points, the
fibre of the Bloch bundle carries a representation of the
group~\(H_x\), and the irreducible representations of~\(H_x\) are the
symmetry labels of energy bands at the point~\(x\).  The multiplicity
\(m_\varrho((E_1)_x)\) is the number of occupied bands that transform
according to the symmetry~\(\varrho\) at the point~\(x\).  If the
number of occupied bands is sufficiently large, then the topological
phase is robust.  That is, its classification by $\K$\nb-theory is
definitive, meaning that the Bloch vector bundle is determined by its
$\K$\nb-theory class.  The conditions on the multiplicities ensure
that there are enough bands of each symmetry type at the high-symmetry
points to rule out ``fragile'' topological configurations that may be
trivialized by adding more bands.

\section{Proofs of the main theorems}
\label{sec:proofs_main}

The proof follows a standard recipe from equivariant obstruction
theory.  We will first prove a preliminary lemma.  Then we prove
Theorem~\ref{the:summand}.  Finally, we show that
Theorem~\ref{the:summand} implies Theorem~\ref{the:stable_iso}.

\begin{lemma}
  \label{lem:extend_injection}
  Let \(\Field \in\{\R,\C,\Quat\}\) and \(c = \dim_\R(\Field)\).  Let
  \(k\in\N\) and let~\(\Disk^k\) be the \(k\)\nb-dimensional disk.  Let
  \(E\to \Disk^k\) be an \(\Field\)\nb-vector bundle of rank~\(r\).
  Let
  \(\iota\colon \partial \Disk^k \times \Field^m \to E|_{\partial
    \Disk^k}\) be an injective vector bundle map.  If
  \(r-m \ge \lceil(k+1)/c \rceil -1\), then~\(\iota\) extends to an
  injective vector bundle map
  \(\iota'\colon \Disk^k \times \Field^m \to E\).
\end{lemma}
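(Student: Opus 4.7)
The plan is to reduce the statement to a classical extension problem for maps into a Stiefel manifold. Since $\Disk^k$ is contractible, the bundle $E$ is $\Field$-linearly trivialisable; I would choose a trivialisation $E \cong \Disk^k\times \Field^r$. Under it, the bundle map $\iota$ corresponds to a continuous map $f\colon S^{k-1} = \partial \Disk^k \to V_m(\Field^r)$, where $V_m(\Field^r)$ denotes the (non-compact) Stiefel manifold of injective $\Field$-linear maps $\Field^m \hookrightarrow \Field^r$. An injective extension $\iota'\colon \Disk^k\times \Field^m \to E$ of $\iota$ is exactly the same datum as a continuous extension of $f$ to a map $\Disk^k\to V_m(\Field^r)$, and such an extension exists if and only if $f$ is nullhomotopic, equivalently, $[f] = 0$ in $\pi_{k-1}(V_m(\Field^r))$.

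Next, I would invoke the classical connectivity of the Stiefel manifold: $V_m(\Field^r)$ is $(c(r-m+1) - 2)$-connected, with $c = \dim_\R(\Field)$. This can be proved by induction on~$m$ using the fibre bundle
\[
  V_{m-1}(\Field^{r-1}) \to V_m(\Field^r) \to S^{cr-1}
\]
obtained by remembering only the first vector of a frame, together with the long exact homotopy sequence and the base case $V_1(\Field^r) = \Field^r\setminus\{0\} \simeq S^{cr-1}$. The hypothesis $r-m \ge \lceil(k+1)/c\rceil - 1$ rearranges to $c(r-m+1) \ge k+1$, hence the connectivity of $V_m(\Field^r)$ is at least $k-1$, so $\pi_{k-1}(V_m(\Field^r)) = 0$ and $f$ admits an extension.

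There is no serious obstacle here: once the Stiefel-manifold connectivity is in hand, the lemma is essentially a formal consequence. The only delicate point worth flagging in the writeup is a uniform treatment of the three skew-fields $\R,\C,\Quat$ in the connectivity computation, but all three cases follow from the same fibre-bundle induction and match the single formula $c(r-m+1) - 2$.
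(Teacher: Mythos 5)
Your proof is correct, and it reaches the same numerical threshold, but it is organised differently from the paper's argument. You trivialise $E$ once and for all, turn the whole problem into a single extension problem for a map $f\colon \Sphere^{k-1}\to V_m(\Field^r)$ into the Stiefel manifold of injective $\Field$-linear maps, and then kill the one obstruction $[f]\in\pi_{k-1}(V_m(\Field^r))$ using the connectivity bound $c(r-m+1)-2\ge k-1$, which under your hypothesis is exactly $c(r-m+1)\ge k+1$. The paper never mentions Stiefel manifolds: it inducts on $m$ at the level of vector bundles, first extending a single nowhere vanishing section (which only needs $\pi_{k-1}(\Sphere^{cr-1})=0$), then splitting off the resulting trivial line subbundle $E_0$, applying the induction hypothesis to the quotient $E/E_0$ of rank $r-1$, and lifting back to $E$ compatibly with $\iota$ on the boundary. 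The two inductions are really the same one in different clothing --- your fibration $V_{m-1}(\Field^{r-1})\to V_m(\Field^r)\to \Sphere^{cr-1}$ is the homotopy-theoretic shadow of the paper's passage to $E/E_0$ --- so each route buys something: yours avoids the slightly fiddly lifting-and-boundary-matching step by packaging everything into one obstruction group (at the cost of quoting or proving Stiefel connectivity, including the homotopy equivalence between the noncompact and orthonormal-frame models), while the paper's version is self-contained, using nothing beyond the vanishing of homotopy groups of spheres below their dimension.
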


\begin{proof}
  First let \(m=1\).  The map~\(\iota\) on vectors of the form
  \((x,1)\) for \(x\in\partial\Disk^k\) gives a nowhere vanishing
  section of~\(E|_{\partial \Disk^k}\), and~\(\iota'\) exists if and
  only if this extends to a nowhere vanishing section of~\(E\) on all
  of~\(\Disk^k\).  Since~\(\Disk^k\) is contractible, the bundle~\(E\)
  is trivial.  So a nowhere vanishing section is equivalent to a map
  to \(\Field^r\setminus \{0\}\), which is homotopy equivalent
  to~\(\Sphere^{c r -1}\).  Thus our claim becomes equivalent to the
  vanishing of \(\pi_{k-1}(\Sphere^{c r -1})\), which is true if
  \(c r-1 \ge k\) or, equivalently, \(r -m\ge \lceil (k+1)/c\rceil-1\)

  For general \(m\ge1\), the claim is proven by induction over~\(m\).
  First, we may extend the inclusion of the first basis vector
  in~\(\Field^m\) to a nowhere vanishing section of~\(E\) by the
  argument above.  The image of that map is a trivial rank~\(1\)
  \(\Field\)\nb-vector subbundle~\(E_0\) of~\(E\).  The
  quotient~\(E/E_0\) is an \(\Field\)\nb-vector bundle
  over~\(\Disk^k\) of rank~\(r-1\), and~\(\iota\) induces an injective
  vector bundle map from the trivial bundle
  \(\partial \Disk^k \times \Field^{m-1}\) to
  \((E/E_0)|_{\partial \Disk^k}\).  By the induction assumption, the
  latter extends to an injective vector bundle map from
  \(\Disk^k \times \Field^{m-1}\) to \(E/E_0\).  We may lift this to a
  vector bundle map to~\(E\) itself in such a way that we get the
  restriction of~\(\iota\) on \(\partial\Disk^k\times \Field^{m-1}\).
  The resulting vector bundle map \(\Disk^k \times \Field^m \to E\)
  remains injective because the map \(\Disk^k \times \Field \to E_0\)
  is an isomorphism and the map
  \(\Disk^k \times \Field^{m-1}\to E/E_0\) is injective.
\end{proof}

\begin{proof}[Proof of Theorem~\textup{\ref{the:summand}}]
  We extend~\(\iota\) by induction over the skeleta.  So assume that
  we have already extended~\(\iota\) to the \(k-1\)\nb-skeleton,
  consisting of~\(A\) and all equivariant cells
  \(G/H\times \Disk^\ell\) with \(\ell < k\).  We get the
  \(k\)\nb-skeleton from this by attaching a disjoint union of
  equivariant cells of the form \(G/H \times \Disk^k\) along their
  boundaries \(G/H \times \partial \Disk^k\), which are mapped to the
  \(k-1\)-skeleton.  It suffices to build an extension of~\(\iota\) on
  each of these equivariant cells separately.  These may then be put
  together to a continuous vector bundle map on the whole
  \(k\)\nb-skeleton.  And when we can find these extensions on all
  skeleta, then letting \(k\to\infty\) gives a continuous map on
  all of~\(X\).  So our task really is to extend a given
  \(G\)\nb-equivariant injective map between the pull-backs of \(V\)
  and~\(E\) to \(G/H \times \partial \Disk^k\) to a
  \(G\)\nb-equivariant injective map between their pull-backs to
  \(G/H \times \Disk^k\).

  A \(G\)\nb-equivariant map \(\varphi\colon V \to E\) between two
  \(G\)\nb-vector bundles over \(G/H \times Y\) for \(Y= \Disk^k\) or
  \(Y=\partial \Disk^k\) is equivalent to an \(H\)\nb-equivariant map
  between the restrictions of the bundles to
  \(Y \cong \{H\} \times Y \subseteq G/H \times Y\).  Thus we are reduced
  to the problem of extending an injective \(H\)\nb-equivariant vector
  bundle map from \(\partial \Disk^k\) to~\(\Disk^k\).  Here the
  domain and target of the vector bundle map are the pull-backs of
  \(V\) and~\(E\) to~\(Y\) along the canonical map \(Y\to X\).  These
  are \(H\)\nb-equivariant vector bundles, which we denote by \(V'\)
  and~\(E'\).  The inequality in the theorem implies
  \(m_\varrho(V'_y)=0\) or
  \(m_\varrho(E'_y) - m_\varrho(V'_y) \ge \left\lceil
    \frac{k+1}{c_\varrho} \right\rceil -1\) for all \(y\in \Disk^k\)
  and all \(\varrho\in \hat{H}_\Field\).

  Let~\(W\) be a finite-dimensional \(\Field\)\nb-vector space with a
  representation of~\(H\).  By Maschke's Theorem, \(W\) is a direct
  sum of irreducible representations of~\(H\).  We want to make this
  canonical.  Let \(\varrho\colon H \to \Gl(U_\varrho)\) be an
  irreducible representation on a finite-dimensional
  \(\Field\)\nb-vector space~\(U_\varrho\).  Then
  \(\Field_\varrho \defeq \Hom_\Field^H(U_\varrho,U_\varrho)\) is a
  finite-dimensional skew-field by Schur's Lemma, so that
  \(\Field_\varrho \in \{\R,\C,\Quat\}\); if \(\Field\in\{\C,\Quat\}\),
  then only \(\Field_\varrho = \Field\) may occur here.  Let
  \(\Hom_\Field^H(U_\varrho,W)\) denote the set of
  \(H\)\nb-equivariant \(\Field\)\nb-linear maps \(U_\varrho\to W\).
  We turn~\(U_\varrho\) into a left \(\Field_\varrho\)\nb-vector
  space.  Then \(\Hom_\Field^H(U_\varrho,W)\) is a right
  \(\Field_\varrho\)\nb-vector space in a natural way, and there is a
  well-defined, natural, \(H\)\nb-equivariant \(\Field\)\nb-linear map
  \[
    \Hom_\Field^H(U_\varrho,W) \otimes_{\Field_\varrho} U_\varrho \to W,\qquad
    f\otimes u\mapsto f(u).
  \]
  It is zero if~\(W\) does not contain the representation~\(\varrho\).
  It is an isomorphism for \(W=U_\varrho\) by definition, and this
  remains so if~\(W\) is a direct sum of copies of~\(U_\varrho\).
  This gives a canonical \(H\)\nb-equivariant \(\Field\)\nb-linear
  isomorphism
  \begin{equation}
    \label{eq:vector_bundle_structure}
    \bigoplus_{\varrho\in\hat{H}_\Field}
    \Hom_\Field^H(U_\varrho,W) \otimes_{\Field_\varrho} U_\varrho
    \to W.
  \end{equation}
  All this still works if~\(W\) is an \(H\)\nb-equivariant
  \(\Field\)\nb-vector bundle over a space~\(Y\) with trivial
  \(H\)\nb-action.  Then \(\Hom_\Field^H(U_\varrho,W)\) is an
  \(\Field_\varrho\)\nb-vector bundle and
  \(\Hom_\Field^H(U_\varrho,W) \otimes_{\Field_\varrho} U_\varrho\)
  for \(\varrho\in \hat{H}_\Field\) is an \(H\)\nb-equivariant
  \(\Field\)\nb-vector bundle over~\(Y\), and the
  isomorphism~\eqref{eq:vector_bundle_structure} is a natural
  \(H\)\nb-equivariant \(\Field\)\nb-vector bundle isomorphism.  Of
  course, an injective \(H\)\nb-equivariant \(\Field\)\nb-linear map
  \(V'\to E'\) between two \(H\)\nb-equivariant vector bundles induces
  injective \(\Field_\varrho\)\nb-vector bundle maps
  \(\Hom_\Field^H(U_\varrho,V')\to \Hom_\Field^H(U_\varrho,E')\) for
  all \(\varrho\in\hat{H}_\Field\).  Conversely, because of the isomorphism
  above, a family of injective \(\Field\)\nb-vector bundle maps
  \(\Hom_\Field^H(U_\varrho,V')\to \Hom_\Field^H(U_\varrho,E')\) for
  all \(\varrho\in\hat{H}_\Field\) induces an injective \(H\)\nb-equivariant
  \(\Field\)\nb-vector bundle map \(V'\to E'\).

  Recall that our problem is to extend a given injective
  \(H\)\nb-equivariant \(\Field\)\nb-vector bundle map
  \(V'|_{\partial \Disk^k} \to E'|_{\partial \Disk^k}\) to an
  injective \(H\)\nb-equivariant \(\Field\)\nb-vector bundle map
  \(V' \to E'\).  By the equivalence in the previous paragraph, the
  given data is equivalent to a family of injective
  \(\Field_\varrho\)\nb-vector bundle maps
  \[
    \Hom_\Field^H(U_\varrho, V'|_{\partial \Disk^k})\to
    \Hom_\Field^H(U_\varrho, E'|_{\partial \Disk^k})
  \]
  for \(\varrho\in\hat{H}_\Field\), and we must extend each of these to an
  injective \(\Field_\varrho\)\nb-vector bundle map
  \(\Hom_\Field^H(U_\varrho, V')\to \Hom_\Field^H(U_\varrho, E')\).
  Here \(\Hom_\Field^H(U_\varrho,V')\) and
  \(\Hom_\Field^H(U_\varrho,E')\) are \(\Field_\varrho\)\nb-vector
  bundles over~\(\Disk^k\) of rank \(m_\varrho(V'_y)\) and
  \(m_\varrho(E'_y)\), respectively, for any \(y\in \Disk^k\).  There
  is nothing to do if \(m_\varrho(V'_y)=0\), and, otherwise, our
  assumptions imply the inequality
  \(m_\varrho(E'_y) - m_\varrho(V'_y) \ge \left\lceil
    \frac{k+1}{c_\varrho} \right\rceil -1\).  This allows to apply
  Lemma~\ref{lem:extend_injection} to get the desired injective vector
  bundle map.
\end{proof}

\begin{proof}[Proof of Theorem~\textup{\ref{the:stable_iso}}]
  We use our data to define an injective vector bundle map over the
  space \(Y\defeq X\times [0,1]\) relative to the subspace
  \(B \defeq A\times [0,1] \cup X\times \{0,1\}\).  We let \(W\)
  and~\(E\) be the pull-backs of \(V\) and \(E_1 \oplus V\),
  respectively, along the coordinate projection \(Y\to X\),
  \((x,s)\mapsto x\).
  We let \(\iota\colon W\to E\) be the following map, which is clearly
  a \(G\)\nb-equivariant vector bundle map.  If
  \((x,s)\in A\times [0,1] \cup X\times \{0\}\), we let
  \(\iota_{(x,s)}\colon V_x \to (E_1)_x \oplus V_x\) be the obvious
  inclusion map in the second summand.  If \((x,s)\in X\times \{1\}\),
  we let \(\iota_{(x,s)}\colon V_x \to (E_1)_x \oplus V_x\) be the
  composite of the obvious inclusion map
  \(\iota_{(x,s)}\colon V_x \to (E_2)_x \oplus V_x\) with the
  isomorphism
  \((\varphi_V)_x^{-1}\colon (E_2)_x \oplus V_x \congto (E_1)_x \oplus
  V_x\).  The two definitions agree for \((x,0)\) with \(x\in A\)
  because \(\varphi_V|_A = \varphi_A \oplus \Id_V\).  Thus~\(\iota\)
  is a well defined \(G\)\nb-equivariant vector bundle map
  \(W|_B \to E|_B\).  The passage from~\(X\) to~\(X\times [0,1]\)
  increases all dimensions by~\(1\), and the multiplicities for \(W\)
  and~\(E\) are \(m_\varrho(W_{x,s}) = m_\varrho(V_x)\) and
  \(m_\varrho(E_{x,s}) = m_\varrho((E_1)_x) + m_\varrho(V_x)\),
  respectively.  Thus the assumption of this theorem implies the
  condition in Theorem~\ref{the:summand} that guarantees that the
  embedding~\(\iota\) extends to a \(G\)\nb-equivariant
  \(\Field\)\nb-vector bundle embedding \(\iota'\colon W\to E\) over
  all of \(X\times [0,1]\).  Since~\(\iota'\) is a
  \(G\)\nb-equivariant \(\Field\)\nb-vector bundle embedding, its
  cokernel \(E/\iota'(W)\) is again a \(G\)\nb-equivariant
  \(\Field\)\nb-vector bundle over \(X\times [0,1]\).  Its
  restrictions to \(X\times\{0\}\) and \(X\times\{1\}\) are isomorphic
  to \(E_1\) and~\(E_2\), respectively.  So these \(G\)\nb-equivariant
  vector bundles are homotopic.  Homotopic vector bundles are well
  known to be isomorphic, and this remains true for
  \(G\)\nb-equivariant vector bundles (see
  \cite{Wasserman:Equivariant_topology}*{Corollary~2.5}).  Thus we get
  the desired isomorphism.
\end{proof}

\section{Ordinary vector bundles, “real” and “quaternionic” vector
  bundles}
\label{sec:ordinary_real}

In this section, we apply Theorems \ref{the:summand}
and~\ref{the:stable_iso} to equivariant vector bundles over specific
groups that give ordinary vector bundles over the (skew)fields \(\R\),
\(\C\) and~\(\Quat\), or ``real'' and ``quaternionic'' vector bundles.
As we shall see, the results of~\cite{Bakuradze-Meyer:Iso_stable_iso}
are special cases of Theorems \ref{the:summand}
and~\ref{the:stable_iso}.  The
main point is that the relevant groups have the property that all
subgroups that are allowed as stabiliser groups have a unique
irreducible odd representation.  Then the multiplicity of the unique
irreducible representation in a given representation is simply a
quotient of dimensions.  Thus we may replace the multiplicities in
Theorems \ref{the:summand} and~\ref{the:stable_iso} by the ranks of
the vector bundles, multiplied by a suitable constant.

\begin{corollary}
  \label{cor:summand_plain}
  Let \(\Field \in\{\R,\C,\Quat\}\) and \(c = \dim_\R(\Field)\).  Let
  \((X,A)\) be a relative CW-complex of dimension~\(d\).  First, let
  \(q\colon V\to X\) and \(p\colon E\to X\) be \(\Field\)\nb-vector
  bundles.  Let \(\iota\colon V|_A \to E|_A\) be an injective
  \(\Field\)\nb-vector bundle map.  Assume that
  \[
    \dim_\Field E_x \ge
    \dim_\Field V_x + \left\lceil \frac{d+1-c}{c} \right\rceil
  \]
  for all \(x\in X\setminus A\) with \(V_x\neq0\).  Then~\(\iota\)
  extends to an injective \(\Field\)\nb-vector bundle map
  \(V \to E\).

  Secondly, let \(q\colon V\to X\) and \(p_j\colon E_j\to X\) for
  \(j=1,2\) be \(\Field\)\nb-vector bundles over~\(X\).  Let
  \(\varphi_A\colon E_1|_A \congto E_2|_A\) and
  \(\varphi_V\colon E_1 \oplus V \congto E_2 \oplus V\) be
  \(\Field\)\nb-vector bundle isomorphisms such that
  \(\varphi_V|_A = \varphi_A \oplus \Id_V\).  Assume that
  \[
    \dim_\Field E_x \ge
    \left\lceil \frac{d+2-c}{c} \right\rceil
  \]
  for all \(x\in X\setminus A\) with \(V_x\neq0\).  Then there is an
  \(\Field\)\nb-vector bundle isomorphism
  \(\varphi\colon E_1 \congto E_2\) such that \(\varphi|_A = \varphi_A\).
\end{corollary}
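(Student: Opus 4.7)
The plan is to deduce both parts of the corollary directly from Theorems~\ref{the:summand} and~\ref{the:stable_iso} applied to the trivial group $G = \{e\}$. An ordinary (non-equivariant) $\Field$-vector bundle is tautologically a $\{e\}$-equivariant $\Field$-vector bundle, and any relative CW-complex of dimension $d$ carries the obvious relative $\{e\}$-CW-structure with the same cells; in particular $d_H = d$ for the unique subgroup $H = \{e\}$ whenever $(X,A)$ has nonempty interior.

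Next, I would verify that the hypotheses of the main theorems collapse to the stated ones. Since $\{e\}$ has only one subgroup, we need only consider $H = \{e\}$, for which $X^{(H)} = X$ and $A^{(H)} = A$, so the multiplicity conditions must hold for every $x \in X \setminus A$. The set $\hat{\{e\}}_\Field$ contains a single irreducible representation, namely $\Field$ itself with the trivial action; its commutant is $\Field$, so $c_\varrho = c = \dim_\R \Field$. For this $\varrho$ and any finite-dimensional $\Field$-vector space $W$, one has $m_\varrho(W) = \dim_\Field W$, and the condition $m_\varrho(V_x) \neq 0$ is simply $V_x \neq 0$.

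With these identifications, the inequality in Theorem~\ref{the:summand} becomes $\dim_\Field E_x \ge \dim_\Field V_x + \lceil (d+1-c)/c \rceil$, which is exactly the hypothesis of the first half of the corollary, and Theorem~\ref{the:summand} then furnishes the desired extension of $\iota$. Similarly, the inequality in Theorem~\ref{the:stable_iso} reduces to $\dim_\Field (E_1)_x \ge \lceil (d+2-c)/c \rceil$, and Theorem~\ref{the:stable_iso} delivers the isomorphism $\varphi$ refining $\varphi_A$. There is really no obstacle beyond this translation: once the equivariant machinery is in place, the corollary is essentially a dictionary lookup for the trivial group. For $\Field \in \{\C,\Quat\}$ one could alternatively route through Proposition~\ref{pro:C-vector_bundle_rewrite} or Example~\ref{exa:quaternion_group} to rewrite the bundles as equivariant real bundles, but this only complicates the bookkeeping without changing the conclusion, since $c_\varrho$ for the unique odd irreducible representation would then equal $\dim_\R \Field$ again.
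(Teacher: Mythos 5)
Your proposal is correct and matches the paper's own proof, which simply invokes Theorems~\ref{the:summand} and~\ref{the:stable_iso} for the trivial group; your explicit dictionary (unique subgroup $H=\{e\}$, unique irreducible representation $\Field$ with $c_\varrho = c$, multiplicities equal to $\dim_\Field$, and $d_H \le d$) just spells out the specialisation the paper leaves implicit.
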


Of course, the dimension conditions for all~\(x\) simplify to
\(\rank(E) \ge \rank(V) + \left\lceil \frac{d+1-c}{c} \right\rceil\)
and \(\rank(E) \ge \left\lceil \frac{d+2-c}{c} \right\rceil\),
respectively, if all fibres have the same dimension.  The formulation
above is more general when different fibres have different
dimensions.

\begin{proof}
  This is the special case of Theorems \ref{the:summand}
  and~\ref{the:stable_iso} when~\(G\) is trivial.
\end{proof}

\begin{corollary}
  \label{cor:summand_plain_absolute}
  Let \(\Field \in\{\R,\C,\Quat\}\) and \(c = \dim_\R(\Field)\).
  Let~\(X\) be a CW-complex of dimension~\(d\).  First, let
  \(q\colon V\to X\) and \(p\colon E\to X\) be \(\Field\)\nb-vector
  bundles.  Assume
  \[
    \dim_\Field E_x \ge
    \dim_\Field V_x + \left\lceil \frac{d+1-c}{c} \right\rceil
  \]
  for all \(x\in X\) with \(V_x\neq0\).  Then there is an injective
  \(\Field\)\nb-vector bundle map \(V \to E\).

  Secondly, let \(q\colon V\to X\) and \(p_j\colon E_j\to X\) for
  \(j=1,2\) be \(\Field\)\nb-vector bundles over~\(X\).  Assume
  \(E_1 \oplus V \cong E_2 \oplus V\) and
  \[
    \dim_\Field E_x \ge
    \left\lceil \frac{d+2-c}{c} \right\rceil
  \]
  for all \(x\in X\) with \(V_x\neq0\).  Then there is an
  \(\Field\)\nb-vector bundle isomorphism \(E_1 \cong E_2\).
\end{corollary}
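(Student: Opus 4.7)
The plan is to derive Corollary~\ref{cor:summand_plain_absolute} as the special case $A = \emptyset$ of Corollary~\ref{cor:summand_plain}. A relative CW-complex $(X,\emptyset)$ is simply an ordinary CW-complex of dimension~$d$, and the condition $x \in X \setminus A$ becomes $x \in X$, so the dimension hypotheses match verbatim.

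For the first assertion, the input data $\iota\colon V|_A \to E|_A$ demanded by Corollary~\ref{cor:summand_plain} is, over $A = \emptyset$, the unique empty vector bundle map, which is vacuously injective. Invoking the first part of Corollary~\ref{cor:summand_plain} then produces the injective extension $V \to E$, which is precisely the embedding we want. For the second assertion, we take $\varphi_A$ to be the vacuous isomorphism over the empty set; the compatibility condition $\varphi_V|_A = \varphi_A \oplus \Id_V$ holds automatically because both sides are maps defined on the empty space. The given stable isomorphism $E_1 \oplus V \cong E_2 \oplus V$ supplies $\varphi_V$, and the second part of Corollary~\ref{cor:summand_plain} yields the isomorphism $\varphi\colon E_1 \congto E_2$.

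Since this is a direct specialisation, there is essentially no obstacle; the only check is that the vacuous input over the empty subspace meets the formal hypotheses of the relative version, which it does. Alternatively, one could obtain the same conclusions by specialising Corollaries~\ref{cor:summand_absolute} and~\ref{cor:stable_iso_absolute} to the trivial group $G = \{e\}$: the only allowed stabiliser is trivial, there is a unique irreducible $\Field$-representation (the trivial one, with commutant $\Field$ so that $c_\varrho = c$), and $d_H$ coincides with the CW-dimension~$d$, whence the multiplicity inequalities collapse to the stated dimension inequalities.
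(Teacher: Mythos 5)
Your proposal is correct and matches the paper's own proof, which likewise obtains the statement by specialising Corollary~\ref{cor:summand_plain} to the case \(A=\emptyset\); the vacuous-data check you spell out is exactly what makes that specialisation legitimate. The alternative route via Corollaries~\ref{cor:summand_absolute} and~\ref{cor:stable_iso_absolute} with trivial \(G\) is also fine but not needed.
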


\begin{proof}
  Specialise Corollary~\ref{cor:summand_plain} to the case
  \(A=\emptyset\).
\end{proof}


Corollary~\ref{cor:summand_plain_absolute} predicts that in dimensions
\(d\le 6\), two \(\Quat\)\nb-line bundles are isomorphic once they are
stably isomorphic.  The following example shows that this fails in
dimension~\(7\), so that the dimension threshold in the second part of
Corollary~\ref{cor:summand_plain_absolute} is optimal for
\(\Quat\)\nb-line bundles.  In addition, it follows that
Theorem~\ref{the:line_bundle_stable_iso} below fails for
\(\Quat\)\nb-line bundles.

\begin{example}
  \label{exa:line_bundle_stable_iso}
  On the \(7\)\nb-sphere~\(\Sphere^7\), there are
  \(12\)~isomorphism classes of rank~\(1\) \(\Quat\)\nb-vector
  bundles, all of which are stably isomorphic.

  First we classify the isomorphism classes of rank 1 $\Quat$-bundles
  over $\Sphere^7$.  These are classified by the homotopy group
  $\pi_7(\mathrm{BSp}(1))$.  There is a long exact sequence of
  homotopy groups for the universal fibration
  $\mathrm{Sp}(1) \to \mathrm{ESp}(1) \to \mathrm{BSp}(1)$.  It
  implies an isomorphism
  $\pi_7(\mathrm{BSp}(1)) \cong \pi_6(\mathrm{Sp}(1))$.  The
  group~$\mathrm{Sp}(1)$ of unit quaternions is homeomorphic to the
  \(3\)\nb-sphere~$\Sphere^3$.  The relevant homotopy group
  $\pi_6(\Sphere^3)$ is computed by Toda~\cite{Toda:Composition_homotopy} to
  be $\pi_6(\Sphere^3) \cong \Z/12\Z$.  Thus there are \(12\) isomorphism
  classes of rank~\(1\) \(\Quat\)\nb-vector bundles
  over~\(\Sphere^7\).

  Secondly, we claim that any two \(\Quat\)\nb-vector bundles
  over~\(\Sphere^7\) of the same rank are isomorphic.  The
  Grothendieck group of the monoid of quaternionic vector bundles over
  a space~\(X\) is \(\mathrm{KO}^4(X)\).  We compute the reduced group
  \(\widetilde{\mathrm{KO}}^4(\Sphere^7)\) using Bott Periodicity:
  \[
    \widetilde{\mathrm{KO}}{}^4(\Sphere^7)
    \cong \widetilde{\mathrm{KO}}(\Sphere^{7+4})
    \cong \mathrm{KO}^3(\mathrm{pt})
    \cong 0.
  \]
  As a consequence, any two \(\Quat\)\nb-vector bundles of the same
  rank over~\(\Sphere^7\) are stably isomorphic.  In particular,
  all the~\(12\) nonisomorphic line bundles over~\(\Sphere^7\) are
  stably isomorphic.
\end{example}

To simplify the comparison to the results
in~\cite{Bakuradze-Meyer:Iso_stable_iso}, we restrict attention to
vector bundles of constant rank from now on, that is, we assume that
all their fibres have the same dimension.

\begin{corollary}
  \label{cor:summand_real}
  Let \((X,A)\) be a relative \(\Z/2\)-CW-complex.  Let~\(d_0\) be the
  maximal dimension of trivial cells and~\(d_1\) the maximal dimension
  of free cells in~\((X,A)\).
  \begin{enumerate}
  \item \label{en:summand_real_1}%
    Let \(q\colon V\to X\) and \(p\colon E\to X\) be ``real'' vector
    bundles over~\(X\) of constant rank and let
    \(\iota\colon V|_A \to E|_A\) be an injective ``real'' vector
    bundle map.  Assume
    \[
      \rank(E) \ge \rank(V) + \max \left\{  \left\lceil d_0,
          \frac{d_1-1}{2} \right\rceil \right\}.
    \]
    Then~\(\iota\) extends to an injective ``real'' vector bundle map
    \(V \to E\).
  \item \label{en:summand_real_2}%
    Let \(q\colon V\to X\) and \(p_j\colon E_j\to X\) for \(j=1,2\) be
    ``real'' vector bundles over~\(X\) of constant rank.  Let
    \(\varphi_A\colon E_1|_A \congto E_2|_A\) and
    \(\varphi_V\colon E_1 \oplus V \congto E_2 \oplus V\) be ``real''
    vector bundle isomorphisms such that
    \(\varphi_V|_A = \varphi_A \oplus \Id_V\).  Assume that
    \[
      \rank(E) \ge \max \left\{  \left\lceil d_0+1,
          \frac{d_1}{2} \right\rceil \right\}.
    \]
    Then there is a ``real'' vector bundle isomorphism
    \(\varphi\colon E_1 \congto E_2\) such that
    \(\varphi|_A = \varphi_A\).
  \end{enumerate}
\end{corollary}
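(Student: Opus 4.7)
The plan is to deduce the corollary from Theorems~\ref{the:summand} and~\ref{the:stable_iso} by recasting ``real'' vector bundles as $D_8$\nb-equivariant odd $\R$\nb-vector bundles. By Example~\ref{exa:G-gamma_equivariant} and Proposition~\ref{pro:twisted_complex_vector_bundle}, a ``real'' vector bundle over the $\Z/2$\nb-space $X$ corresponds to a $D_8$\nb-equivariant $\R$\nb-vector bundle that is odd with respect to the central rotation $t$ by $\pi$; the normal subgroup $\Z/4 \subseteq D_8$ acts trivially on $X$, and the quotient $D_8/\Z/4 \cong \Z/2$ realises the given involution. This dictionary preserves direct sums, subbundles and quotients, so injective vector bundle maps and isomorphisms of ``real'' bundles match those of the corresponding $D_8$\nb-equivariant bundles. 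Under it, a relative $\Z/2$-CW structure on $(X,A)$ becomes a relative $D_8$-CW structure whose equivariant cells have the form $D_8/D_8 \times \Disk^\ell$ (from trivial $\Z/2$-cells) or $D_8/(\Z/4) \times \Disk^\ell$ (from free $\Z/2$-cells). Thus the only stabilisers that appear are $H = D_8$ and $H = \Z/4$, with $d_{D_8} = d_0$ and $d_{\Z/4} = d_1$.

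Next I would pin down the unique odd irreducible real representation of each of these two stabilisers using Proposition~\ref{pro:unique_odd}. For $\Z/4$, this irrep is the character $[k] \mapsto \ima^k$ realised on $\R^2$; its commutant is $\C$, giving $c_\varrho = 2$. For $D_8$, it is the standard rotation/reflection representation on $\R^2$, which remains irreducible after complexification because $D_8$ has exactly one two-dimensional complex irreducible; consequently its commutant is $\R$, giving $c_\varrho = 1$. Since both unique odd irreps have real dimension two, the multiplicity of the odd irrep in any fibre equals the complex rank of the underlying ``real'' bundle, so $m_\varrho(E_x) = \rank(E)$ and $m_\varrho(V_x) = \rank(V)$ at every relevant point. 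This converts the multiplicity hypotheses of Theorems~\ref{the:summand} and~\ref{the:stable_iso} into rank hypotheses on the bundles.

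It then remains to substitute. For part~\ref{en:summand_real_1}, Theorem~\ref{the:summand} becomes $\rank(E) \ge \rank(V) + d_0$ at $H = D_8$ and $\rank(E) \ge \rank(V) + \lceil (d_1-1)/2 \rceil$ at $H = \Z/4$; taking the maximum of the two right-hand sides yields the stated hypothesis. For part~\ref{en:summand_real_2}, Theorem~\ref{the:stable_iso} becomes $\rank(E) \ge d_0 + 1$ at $H = D_8$ and $\rank(E) \ge \lceil d_1/2 \rceil$ at $H = \Z/4$, whose maximum is the hypothesis of~\ref{en:summand_real_2}. The main conceptual hurdle is identifying $c_\varrho$ correctly for each odd irrep, because this invariant is exactly what turns the generic shift $\lceil (d_H+1-c_\varrho)/c_\varrho \rceil$ into either $d_0$ or $\lceil (d_1-1)/2 \rceil$; once $c_\varrho = 1$ for $D_8$ and $c_\varrho = 2$ for $\Z/4$ are secured, the remaining bookkeeping is routine.
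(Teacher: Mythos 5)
Your proposal is correct and follows essentially the same route as the paper: translate ``real'' bundles into odd $D_8$\nb-equivariant $\R$\nb-bundles, observe that the only stabilisers are $D_8$ (with $d_H=d_0$) and the rotation subgroup $\Z/4$ (with $d_H=d_1$), use Proposition~\ref{pro:unique_odd} to get the unique odd irreducibles with $c_\varrho=1$ and $c_\varrho=2$ respectively, identify the multiplicities with the complex ranks, and plug into Theorems~\ref{the:summand} and~\ref{the:stable_iso}. The bookkeeping matches the paper's proof exactly.
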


\begin{proof}
  Let~\(D_8\) be the dihedral group and let \(t\in D_8\) be rotation
  by~\(\pi\), which is a central involution.  We have already seen in
  Section~\ref{sec:bundles} that a ``real'' vector bundle is the same
  as an odd vector bundle over~\(D_8\) where all rotations act
  trivially on the base~\(X\) and reflections act by~\(\tau\).
  Thus~\((X,A)\) becomes a relative \(D_8\)-CW-complex.  Only two
  subgroups occur as stabilisers, namely, all of~\(D_8\) at the points
  that are fixed by~\(\tau\), and the index-2 subgroup of rotations at
  the points that are not fixed by~\(\tau\).  First let
  \(\tau(x) = x\).  Then the stabiliser group is \(H = D_8\) and
  \(d_H = d_0\).  By Proposition~\ref{pro:unique_odd}, \(H\) has a
  unique irreducible, odd representation~\(\varrho\), namely, the one
  on \(\C\cong \R^2\) mapping the generator~\(a\) to multiplication
  by~\(\ima\) and the other generator~\(b\) to complex conjugation.
  The commutant of this is~\(\R\), so that \(c_\varrho = 1\).  The
  multiplicity of~\(\varrho\) in a representation is the dimension of
  the corresponding \(\C\)\nb-vector space.  Thus
  \(m_\varrho(V_x) = \dim_\C V_x = \rank(V)\) and
  \(m_\varrho(E_x) = \rank(E)\).  Next let \(\tau(x) \neq x\).  Then
  the stabiliser group is the subgroup of rotations
  \(H = \langle a\rangle \cong \Z/4\) and \(d_H = d_1\).  The
  group~\(H\) has a unique irreducible, odd representation~\(\varrho\)
  by Proposition~\ref{pro:unique_odd}, namely, the one on
  \(\C\cong \R^2\) mapping~\(a\) to multiplication by~\(\ima\).  This
  has commutant~\(\C\), so that \(c_\varrho = 2\).  The multiplicity
  of~\(\varrho\) in a representation is the dimension of the
  corresponding \(\C\)\nb-vector space.  Thus
  \(m_\varrho(V_x) = \dim_\C V_x = \rank(V)\) and
  \(m_\varrho(E_x) = \rank(E)\).  Putting together the conditions for
  all \(x\in X\) in Theorems \ref{the:summand}
  and~\ref{the:stable_iso} now gives the conditions in the two
  statements of the corollary.  So Theorems \ref{the:summand}
  and~\ref{the:stable_iso} give the desired conclusions.
\end{proof}


\begin{corollary}
  \label{cor:summand_quaternionic}
  Let \((X,A)\) be a relative \(\Z/2\)-CW-complex.  Let~\(d_0\) be the
  maximal dimension of trivial cells and~\(d_1\) the maximal dimension
  of free cells in~\((X,A)\).
  \begin{enumerate}
  \item Let \(q\colon V\to X\) and \(p\colon E\to X\) be
    ``quaternionic'' vector bundles over~\(X\) of constant rank.  Let
    \(\iota\colon V|_A \to E|_A\) be an injective ``quaternionic''
    vector bundle map.  Assume that
    \[
      \rank(E) \ge \rank(V) + \max \left\{
        2 \left\lceil \frac{d_0-3}{4}\right\rceil,
        \left\lceil \frac{d_1-1}{2}\right\rceil \right\}.
    \]
    Then~\(\iota\) extends to an injective ``quaternionic'' vector
    bundle map \(V \to E\).
  \item Let \(q\colon V\to X\) and \(p_j\colon E_j\to X\) for
    \(j=1,2\) be ``quaternionic'' vector bundles over~\(X\) of
    constant rank.  Let \(\varphi_A\colon E_1|_A \congto E_2|_A\) and
    \(\varphi_V\colon E_1 \oplus V \congto E_2 \oplus V\) be
    ``quaternionic'' vector bundle isomorphisms such that
    \(\varphi_V|_A = \varphi_A \oplus \Id_V\).  Assume that
    \[
      \rank(E) \ge
      \max \left\{
        2 \left\lceil \frac{d_0-2}{4}\right\rceil,
        \left\lceil \frac{d_1}{2}\right\rceil \right\}.
    \]
    Then there is a ``quaternionic'' vector bundle isomorphism
    \(\varphi\colon E_1 \congto E_2\) such that
    \(\varphi|_A = \varphi_A\).
  \end{enumerate}
\end{corollary}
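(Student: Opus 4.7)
My plan is to follow the same strategy as the proof of Corollary~\ref{cor:summand_real}, replacing the dihedral group $D_8$ by the quaternion group $Q_8$. By Example~\ref{exa:quaternion_group}, a ``quaternionic'' vector bundle on the $\Z/2$-space $X$ becomes a $Q_8$-equivariant $\R$-vector bundle that is odd with respect to the central involution $t = a^2 = b^2$; here the generator $a$ acts trivially on the base and $b$ acts through the given involution $\tau$. Under this identification, $(X,A)$ becomes a relative $Q_8$-CW-complex with at most two stabiliser types: the full group $Q_8$ at the $\tau$-fixed points, contributing cells of dimension at most $d_0$, and the subgroup $\langle a\rangle \cong \Z/4$ at the points where $\tau$ acts freely, contributing cells of dimension at most $d_1$.

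Next I would read off the unique odd irreducible representation of each stabiliser and its commutant from Proposition~\ref{pro:unique_odd}. For $Q_8$ this is the standard representation on $\Quat$, with commutant $\Quat$ and hence $c_\varrho = 4$; for $\langle a\rangle \cong \Z/4$ it is $\C$ with $a\mapsto \ima$, with commutant $\C$ and hence $c_\varrho = 2$. The key calculation is then to translate the multiplicities $m_\varrho$ into the complex rank $r$ of the quaternionic bundle. At a $\tau$-fixed point the fibre is a quaternionic vector space of quaternionic dimension $r/2$, so $m_\varrho = r/2$; at a free point the fibre is only a complex vector space of complex dimension $r$ (with $a$ acting as multiplication by $\ima$), so $m_\varrho = r$.

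Plugging these identifications into Theorem~\ref{the:summand} produces one condition per stratum. At a $\tau$-fixed point, $m_\varrho(E_x)\ge m_\varrho(V_x) + \lceil(d_0-3)/4\rceil$ becomes, after clearing the factor $1/2$, the inequality $\rank(E) \ge \rank(V) + 2\lceil(d_0-3)/4\rceil$; at a free point it becomes $\rank(E) \ge \rank(V) + \lceil(d_1-1)/2\rceil$. Taking the maximum of the two bounds over all strata that actually occur is exactly the hypothesis of part~(1), so Theorem~\ref{the:summand} furnishes the required injective extension of $\iota$. Part~(2) is handled identically by applying Theorem~\ref{the:stable_iso}: with $d_H+2-c_\varrho$ replacing $d_H+1-c_\varrho$, the two strata give $\rank(E)\ge 2\lceil(d_0-2)/4\rceil$ and $\rank(E)\ge \lceil d_1/2\rceil$, whose maximum is the stated hypothesis.

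The only point that requires genuine care is the bookkeeping with the multiplicities, in particular the factor of two appearing only in the fixed-point bound. It originates from the combination of $c_\varrho = 4$ at $Q_8$-fixed points with the fact that $m_\varrho$ there counts the quaternionic, not the complex, dimension of the fibre, so clearing the resulting denominator is what produces the extra factor of $2$. Once this accounting is done correctly, the corollary is immediate from Theorems~\ref{the:summand} and~\ref{the:stable_iso}.
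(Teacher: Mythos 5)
Your proposal is correct and follows essentially the same route as the paper: identify ``quaternionic'' bundles with odd $Q_8$-equivariant $\R$-bundles via Example~\ref{exa:quaternion_group}, note the two stabiliser types $Q_8$ (with the defining quaternionic representation, $c_\varrho=4$, multiplicity $\rank/2$) and $\langle a\rangle\cong\Z/4$ (with the complex character, $c_\varrho=2$, multiplicity $\rank$), and feed these into Theorems~\ref{the:summand} and~\ref{the:stable_iso}. The multiplicity bookkeeping, including the factor of $2$ at the $\tau$-fixed stratum, matches the paper's computation exactly.
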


\begin{proof}
  We turn a ``quaternionic'' vector bundle into an odd equivariant
  vector bundle as in Example~\ref{exa:quaternion_group}.  That is,
  \(G=Q_8 \subseteq \Quat\) is the quaternion group with~\(8\)
  elements and \(t= -1 \in Q_8\) is the unique nontrivial central
  element.  We let \(\langle\ima\rangle\) act trivially on~\(X\) and
  let elements of \(Q_8\setminus \langle \ima\rangle\) act
  by~\(\tau\).  Thus~\((X,A)\) becomes a relative \(Q_8\)-CW-complex.
  Only two subgroups occur as stabilisers, namely, all of~\(Q_8\) at
  the points that are fixed by~\(\tau\), and \(\langle \ima\rangle\)
  at the points that are not fixed by~\(\tau\).  First let
  \(\tau(x) = x\).  Then the stabiliser group~\(H\) is~\(Q_8\) and
  \(d_H= d_0\).  By Proposition~\ref{pro:unique_odd}, the
  group~\(Q_8\) has a unique irreducible, odd
  representation~\(\varrho\), namely, the defining representation
  \(Q_8 \hookrightarrow \Quat\).  Its commutant is~\(\Quat\), so that
  \(c_\varrho = 4\).  The multiplicity of~\(\varrho\) in a
  representation is the dimension of the corresponding
  \(\Quat\)\nb-vector space.  Thus
  \(m_\varrho(V_x) = \dim_\C V_x/2 = \rank(V)/2\) and
  \(m_\varrho(E_x) = \rank(E)/2\).  So the conditions at~\(x\) in
  Theorems \ref{the:summand} and~\ref{the:stable_iso} specialise to
  \(\rank(E) \ge \rank(V) + 2 \left\lceil
    \frac{d_0-3}{4}\right\rceil\) and
  \(\rank(E) \ge 2 \left\lceil \frac{d_0-2}{4}\right\rceil\),
  respectively.  Now let \(\tau(x) \neq x\).  Then the stabiliser
  group is \(H = \langle \ima\rangle \cong \Z/4\) and \(d_H = d_1\).
  The group~\(H\) also has a unique irreducible, odd
  representation~\(\varrho\) by Proposition~\ref{pro:unique_odd},
  namely, the obvious one on \(\C\cong \R^2\).  This has
  commutant~\(\C\), so that \(c_\varrho = 2\).  The multiplicity
  of~\(\varrho\) in a representation is the dimension of the
  corresponding \(\C\)\nb-vector space.  Thus
  \(m_\varrho(V_x) = \rank(V)\) and \(m_\varrho(E_x) = \rank(E)\).
  Putting the conditions at~\(x\) in Theorems \ref{the:summand}
  and~\ref{the:stable_iso} together with the condition found for
  \(\tau(x)=x\) now gives the condition with the maximum in this
  corollary.  So our corollary follows from these theorems.
\end{proof}

The previous corollary generalises
\cite{Bakuradze-Meyer:Iso_stable_iso}*{Theorems 3.3 and~3.4} when we
take into account that the rank of a trivial ``quaternionic'' bundle
is always even.

Both Corollaries \ref{cor:summand_real}
and~\ref{cor:summand_quaternionic} have absolute versions for
\(A=\emptyset\), where the piece of data~\(\iota\) is left out.  These
are related to them in the same way that Corollary
\ref{cor:summand_absolute} is related to Theorems \ref{the:summand}
and~\ref{the:stable_iso}.  We leave it to the reader to formulate them.

\section{Subbundles of a trivial bundle}
\label{sec:subbundle_trivial}

Let~\(G\) be a finite group with a central involution~\(t\).
Let~\(X\) be a finite-dimensional \(G\)-CW-complex on which~\(t\) acts
trivially.  Thus~\(X^{(H)}\) is empty for subgroups that do not
contain~\(t\).

In this section, we apply Theorem~\ref{the:summand} to
prove a quantitative version of Swan's Theorem about embedding vector
bundles into a trivial bundle.  Let
\[
  \R[G]_- \defeq
  \setgiven{f\in \R[G]}{f(t g) = -f(g) \text{ for all }g\in G}.
\]
This is the odd part of the regular representation of~\(G\)
over~\(\R\).  The Peter--Weyl Theorem says that \(\R[G]\) is the
direct sum over all irreducible real representations~\(\varrho\)
of~\(G\) with multiplicity \(\dim(\varrho)/c_\varrho\), where
\(\dim(\varrho)\) is the dimension of the underlying \(\R\)\nb-vector
space of~\(\varrho\); so \(\dim(\varrho)/c_\varrho\) is the dimension
of~\(\varrho\) over the field~\(\Field_\varrho\).  Since~\(t\) acts as
\(+1\) in even representations and~\(-1\) in odd irreducible
representations, it follows that \(\R[G]_-\) is an odd representation
that contains each odd irreducible representation~\(\varrho\) of~\(G\)
with the multiplicity \(\dim(\varrho)/c_\varrho\).

The important thing here is that this multiplicity is nonzero.
Analogous results hold if we replace~\(\R[G]_-\) by another odd
representation that contains all odd irreducible representations with
nonzero multiplicity.  (We mention without proof that this implies the
corresponding statement for all subgroups \(H\subseteq G\)
containing~\(t\).)

Let \(H\subseteq G\) be a subgroup containing~\(t\).  Then \(\R[G]_-\)
is the direct sum of \([G:H]\) copies of \(\R[H]_-\).  So~\(\R[G]_-\)
as a representation of~\(H\) is an odd representation that contains
each odd irreducible representation~\(\varrho\) of~\(H\) with the
multiplicity \([G:H]\cdot \dim(\varrho)/c_\varrho\).  Let \(r\in\N\)
and consider the trivial odd \(G\)\nb-equivariant \(\R\)\nb-vector
bundle \(X\times \R[G]_-^r\).  Its fibre at \(x\in X\) contains each
irreducible odd representation~\(\varrho\) of the stabiliser
group~\(G_x\) with multiplicity
\(r\cdot [G: G_x]\cdot \dim(\varrho)/c_\varrho\).

\begin{corollary}[Equivariant Swan's Theorem]
  \label{cor:Swan}
  Let \(V\) be an odd \(G\)\nb-equivariant \(\R\)\nb-vector bundle
  over~\(X\).  Let
  \[
    r\defeq \max \left\{
        \left\lceil
          \frac{c_\varrho m_\varrho(V_x) + d_H + 1 - c_\varrho}
          {[G: G_x]\cdot \dim(\varrho)}
        \right\rceil
        \middle|
        x\in X \text{ and } \varrho\in \hat{G}_x \text{ with }m_\varrho(V_x)\neq 0
      \right\},
  \]
  Then~\(V\) is isomorphic to a direct summand in the trivial bundle
  \(X\times \R[G]_-^r\) .
\end{corollary}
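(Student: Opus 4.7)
The plan is to apply Corollary~\ref{cor:summand_absolute} with $\Field=\R$, source bundle~$V$, and target $E\defeq X\times \R[G]_-^r$, and then to split the resulting injection by taking a fibrewise $G$-invariant orthogonal complement.

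First, I would read off the multiplicities on the target. As recorded in the paragraph preceding the corollary, for each $x\in X$ and each irreducible odd representation $\varrho$ of the stabiliser~$G_x$, the fibre $E_x$ contains~$\varrho$ with multiplicity $r\cdot [G:G_x]\cdot \dim(\varrho)/c_\varrho$. Crucially, this number is an integer, so $r\cdot [G:G_x]\cdot \dim(\varrho)$ is a multiple of~$c_\varrho$. Since~$V$ is odd, only odd irreducible~$\varrho$ satisfy $m_\varrho(V_x)\neq 0$, so only such $\varrho$ need to be checked when applying Corollary~\ref{cor:summand_absolute}.

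Next I would verify the numerical hypothesis. Writing $H=G_x$, the inequality we must show is
\[
  r\cdot [G:H]\cdot \dim(\varrho)/c_\varrho \;\ge\; m_\varrho(V_x) + \left\lceil \tfrac{d_H+1-c_\varrho}{c_\varrho}\right\rceil,
\]
or equivalently, after multiplying through by~$c_\varrho$,
\[
  r\cdot [G:H]\cdot \dim(\varrho) \;\ge\; c_\varrho\, m_\varrho(V_x) + c_\varrho\left\lceil \tfrac{d_H+1-c_\varrho}{c_\varrho}\right\rceil.
\]
By the definition of~$r$, the left-hand side is at least $c_\varrho\, m_\varrho(V_x) + d_H + 1 - c_\varrho$. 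Since it is also a multiple of~$c_\varrho$ by the first step, it is at least the smallest multiple of~$c_\varrho$ dominating $c_\varrho\, m_\varrho(V_x) + d_H + 1 - c_\varrho$, which is exactly the right-hand side. This divisibility observation is the main subtle point of the proof: the bound in the corollary looks weaker than what Theorem~\ref{the:summand} requires, but the fact that $r[G:H]\dim(\varrho)$ is automatically a multiple of~$c_\varrho$ closes the gap.

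Corollary~\ref{cor:summand_absolute} then supplies an injective $G$-equivariant $\R$-vector bundle map $\iota\colon V \hookrightarrow X\times \R[G]_-^r$. To realise~$V$ as a direct summand, I would fix a $G$-invariant inner product on~$\R[G]_-$ (obtained by averaging an arbitrary inner product over the finite group~$G$) and take the fibrewise orthogonal complement of the subbundle $\iota(V)$ in the total bundle. This produces a $G$-equivariant vector bundle complement~$W$ with $X\times \R[G]_-^r \cong V \oplus W$ as $G$-equivariant \(\R\)\nb-vector bundles, completing the proof.
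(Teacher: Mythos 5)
Your proposal is correct and follows essentially the same route as the paper: apply Theorem~\ref{the:summand} (in its absolute form) with target $X\times \R[G]_-^r$, using the multiplicity count $r\,[G:G_x]\dim(\varrho)/c_\varrho$ from the preceding paragraph, and then split off $V$ via the orthogonal complement for a $G$-invariant inner product. Your explicit integrality observation, that $r\,[G:G_x]\dim(\varrho)/c_\varrho$ is an integer and hence dominates $m_\varrho(V_x)+\left\lceil (d_{G_x}+1-c_\varrho)/c_\varrho\right\rceil$ once it dominates the unrounded bound, is exactly the point the paper leaves implicit in asserting that this $r$ satisfies the hypothesis of Theorem~\ref{the:summand}.
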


\begin{proof}
  The number~\(r\) is the smallest one for which
  Theorem~\ref{the:summand} ensures that there is a
  \(G\)\nb-equivariant embedding
  \(V\hookrightarrow X\times \R[G]_-^r\).  There is a
  \(G\)\nb-invariant scalar product on~\(\R[G]_-^r\), and the
  orthogonal complement~\(V^\bot\) of the image of~\(V\) is another
  \(G\)\nb-equivariant vector bundle over~\(X\) , such that
  \(V\oplus V^\bot \cong X\times \R[G]_-^r\).
\end{proof}

If we do not care about an optimal value for~\(r\), we could estimate
\(c_\varrho m_\varrho(V_x) \le \dim (V_x)\) and \(d_H \le \dim(X)\).
Since \([G: G_x] \dim(\varrho) \ge1\), the number
\(\rank(V) + \dim(X) + 1\) provides an upper bound
for~\(r\) and so~\(V\) also embeds into the trivial bundle
\(X\times \R[G]_-^{\rank(V) + \dim(X) + 1}\).

Assume now that we are in the special cases considered in
Section~\ref{sec:ordinary_real}.  Then for each stabiliser
group~\(G_x\) there is a unique irreducible odd representation
of~\(G_x\).  This implies \(c_\varrho m_\varrho(V_x) = \dim (V_x)\).
In addition, there are only two subgroups that occur as stabilisers,
and so the expression for~\(r\) simplifies to expressions familiar
from Section~\ref{sec:ordinary_real}.  We refrain from working this
out explicitly.

Still in the special cases considered in
Section~\ref{sec:ordinary_real}, another point is noteworthy: if the
rank of~\(V\) is sufficiently high, then we may write~\(V\) as a
direct sum of a trivial bundle and another bundle~\(V_0\) whose rank
is bounded above by a certain threshold, which is computed in
Section~\ref{sec:ordinary_real} in each case.  Since the rank
of~\(V_0\) is bounded above, Corollary~\ref{cor:Swan} provides an
embedding of~\(V_0\) into a trivial bundle of rank~\(r\) for
some~\(r\) that does not depend on~\(V_0\).  Therefore, for a
sufficiently high~\(r\) depending only on the dimension of~\(X\), any
odd \(G\)\nb-equivariant vector bundle over~\(X\) is a direct sum of a
trivial bundle and of a subbundle of the trivial bundle of rank~\(r\).
We work this out for ``real'' bundles, the other cases being
similar:

\begin{corollary}
  \label{cor:structure_real_bundles}
  Let~\(X\) be a \(\Z/2\)-CW-complex and let \(V\) be a ``real''
  vector bundle over~\(X\) of some constant rank.  Define \(d_0\)
  and~\(d_1\) as in Corollary~\textup{\ref{cor:summand_real}}.  Let
  \[
    r \defeq
    \max \left\{  \left\lceil d_0,
        \frac{d_1-1}{2} \right\rceil \right\}.
  \]
  Then~\(V\) is isomorphic to a direct sum of a trivial bundle and a
  ``real'' vector bundle of rank at most~\(r\).  The latter is a
  direct summand in the trivial ``real'' vector bundle of rank
  \(r+ \min \{r, \rank(V)\} \le 2 r\).

  Assume now that the rank of~\(V\) is at least
  \[
    r_2 \defeq \max \left\{  \left\lceil d_0+1,  \frac{d_1}{2}
      \right\rceil \right\}.
  \]
  Then~\(V\) is a direct sum of a trivial ``real'' vector bundle and a
  subbundle~\(V_0\) of rank~\(r_2\) of the trivial ``real'' vector
  bundle of rank~\(2 r_2\).  Two such vector bundles \(V,V'\) are
  stably isomorphic if and only if the corresponding projections from
  the trivial bundle onto~\(V_0\) are conjugate.
\end{corollary}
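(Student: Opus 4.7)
The strategy is to iteratively peel off trivial ``real'' line bundle summands from~\(V\) using Corollary~\ref{cor:summand_real}(1), and then realise the remaining low-rank piece as a direct summand of a trivial ``real'' bundle by the same corollary applied in the reverse direction. Throughout I fix a \(D_8\)\nb-equivariant Hermitian inner product on each bundle in sight, so that every embedded subbundle splits off as an orthogonal direct summand.

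For the first claim, let~\(T\) denote the trivial ``real'' line bundle over~\(X\), corresponding to the unique odd irreducible \(D_8\)\nb-representation of Proposition~\ref{pro:unique_odd}. Corollary~\ref{cor:summand_real}(1), applied with \(A=\emptyset\), source~\(T\) and target any ``real'' bundle of rank at least~\(1+r\), produces an injection of~\(T\) into that bundle, and orthogonal complementation then extracts~\(T\) as a direct summand. Iterating until the remaining rank is at most~\(r\) yields \(V\cong T^{k}\oplus V_{0}^{(1)}\) with \(\rank(V_{0}^{(1)})\le\min\{r,\rank(V)\}\). A further application of Corollary~\ref{cor:summand_real}(1), this time with source~\(V_{0}^{(1)}\) and target the trivial ``real'' bundle~\(T^{N}\) of rank \(N=\rank(V_{0}^{(1)})+r\le r+\min\{r,\rank(V)\}\), embeds~\(V_{0}^{(1)}\) into~\(T^{N}\), and the orthogonal complement yields the asserted decomposition.

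For the second claim, an elementary comparison of the defining ceilings shows \(r_{2}\ge r\). If \(\rank(V)\ge r_{2}\), then the decomposition above satisfies \(k=\rank(V)-\rank(V_{0}^{(1)})\ge r_{2}-\rank(V_{0}^{(1)})\), so I may regroup trivial summands as
\[
V\cong T^{\rank(V)-r_{2}}\oplus V_{0},\qquad V_{0}\defeq T^{r_{2}-\rank(V_{0}^{(1)})}\oplus V_{0}^{(1)},
\]
with \(\rank(V_{0})=r_{2}\). Applying Corollary~\ref{cor:summand_real}(1) with source~\(V_{0}\) and target~\(T^{2r_{2}}\) requires \(2r_{2}\ge r_{2}+r\), which holds since \(r_{2}\ge r\); an orthogonal complement realises~\(V_{0}\) as a rank-\(r_{2}\) direct summand of~\(T^{2r_{2}}\).

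For the equivalence, let \(V,V'\) both have rank \(\ge r_{2}\) with decompositions \(V\cong T^{k}\oplus V_{0}\) and \(V'\cong T^{k'}\oplus V_{0}'\), where \(V_{0},V_{0}'\subseteq T^{2r_{2}}\) have rank~\(r_{2}\). If \(V\) and~\(V'\) are stably isomorphic, then matching total ranks yields \(V_{0}\oplus T^{N}\cong V_{0}'\oplus T^{N}\) for some~\(N\), and Corollary~\ref{cor:summand_real}(2), whose rank hypothesis \(\rank(V_{0})\ge r_{2}\) holds with equality, gives \(V_{0}\cong V_{0}'\). Writing \(V_{0}^{\bot},V_{0}'^{\bot}\) for the orthogonal complements in~\(T^{2r_{2}}\), each of rank~\(r_{2}\), the relation \(V_{0}^{\bot}\oplus V_{0}\cong T^{2r_{2}}\cong V_{0}'^{\bot}\oplus V_{0}\) and another use of Corollary~\ref{cor:summand_real}(2) yield \(V_{0}^{\bot}\cong V_{0}'^{\bot}\). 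Combining these isomorphisms produces an automorphism \(\varphi\in\Aut(T^{2r_{2}})\) with \(\varphi(V_{0})=V_{0}'\), so conjugation by~\(\varphi\) carries the orthogonal projection onto~\(V_{0}\) to the orthogonal projection onto~\(V_{0}'\). Conversely, any such conjugating automorphism sends \(V_{0}\) isomorphically onto~\(V_{0}'\), whence \(V\oplus T^{k'}\cong V'\oplus T^{k}\). The main technical hurdle is the cancellation step used in the forward direction, which is precisely the service provided by Corollary~\ref{cor:summand_real}(2).
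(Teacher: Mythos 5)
Your proof is correct and takes essentially the same route as the paper: split off a maximal trivial summand using Corollary~\ref{cor:summand_real}.(\ref{en:summand_real_1}), embed the remaining piece of rank at most \(r\) (respectively, exactly \(r_2\)) into a trivial ``real'' bundle of rank at most \(2r\) (respectively \(2r_2\)) by the same corollary, and then use the cancellation provided by Corollary~\ref{cor:summand_real}.(\ref{en:summand_real_2}) to pass from stable isomorphism of \(V_0, V_0'\) and of their orthogonal complements to conjugacy of the projections. The only cosmetic differences are that you peel off trivial line bundles one at a time instead of embedding the whole trivial summand in one application, and that, where the paper quotes from Bakuradze--Meyer the criterion that the projections are conjugate if and only if both the subbundles and their complements are isomorphic, you verify the needed direction directly by assembling the conjugating automorphism as the direct sum of the two isomorphisms, which is a harmless, slightly more self-contained variant.
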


\begin{proof}
  Let \(a\defeq \rank V - r\).  If \(a\le 0\), then~\(V\) itself has
  rank at most~\(r\).  Otherwise, let~\(W\) be the trivial bundle of
  rank~\(a\).  Then
  Corollary~\ref{cor:summand_real}.(\ref{en:summand_real_1})
  provides an embedding \(W\hookrightarrow V\), so that
  \(V\cong W\oplus V_0\) for a ``real'' vector bundle of rank~\(r\).
  In all cases, \(V\cong W \oplus V_0\) for a trivial ``real'' vector
  bundle~\(W\) and a ``real'' vector bundle~\(V_0\) of rank
  \(\min \{r, \rank(V)\}\).  This proves the first claim.  Next,
  Corollary~\ref{cor:summand_real}.(\ref{en:summand_real_1})
  implies that~\(V_0\) embeds into the trivial ``real'' bundle of rank
  \(r + \rank(V_0) = r + \min \{r, \rank(V)\} \le 2 r\) as asserted.

  Similar arguments work with the slightly larger rank~\(r_2\)
  instead.  If \(\rank(V) \ge r_2\), then
  Corollary~\ref{cor:summand_real}.(\ref{en:summand_real_2})
  applies both to~\(V_0\) and to its orthogonal
  complement~\(V_0^\bot\) in the trivial bundle of rank~\(2 r_2\).  It
  is noted in~\cite{Bakuradze-Meyer:Iso_stable_iso} that the
  orthogonal projections onto two subbundles \(V_0\) and~\(V_0'\) of
  the trivial ``real'' vector bundle \(X\times \C^{2 r_2}\) are
  conjugate in the algebra
  \[
    \setgiven[\big]{f\in \mathrm{C}(X,\mathbb{M}_{2 r_2}(\C))}
    {f(\tau(z)) = \conj{f(z)}}
  \]
  if and only if both the bundles \(V_0\) and~\(V_0'\) and their
  orthogonal complements are isomorphic.  Since both \(V_0\) and its
  orthogonal complement have rank~\(r_2\), isomorphism is equivalent
  to stable isomorphism for them.  Since
  \([V_0] + [V_0^\bot] = [X\times \C^{2 r_2}] = [V_0'] +
  [(V_0')^\bot]\), a stable isomorphism
  \(V_0 \oplus W \cong V_0' \oplus W\) implies a stable isomorphism
  \[
    (V_0')^\bot \oplus V_0 \oplus W
    \cong (V_0')^\bot \oplus V_0' \oplus W
    \cong (V_0)^\bot \oplus V_0 \oplus W
  \]
  as well.  So a stable isomorphism of \(V_0\) and~\(V_0'\) implies
  that the corresponding projections are conjugate.  The converse is
  trivial.
\end{proof}

\section{Hamiltonians with crystallographic symmetries}
\label{sec:crystallographic}

In this section, we briefly explain the Bloch vector bundle of a
tight-binding Hamiltonian that has only crystallographic symmetries
and apply our main results to this situation.  For a
\(d\)\nb-dimensional material, this is a \(G\)\nb-equivariant vector
bundle over the \(d\)\nb-dimensional torus, where~\(G\) is the point
group of the relevant crystallographic group.

We are very brief about the physical modelling and refer
to~\cite{Prodan-Schulz-Baldes:Bulk_boundary} for more details.  We
work on the Hilbert space \(\ell^2(\Z^d,\C^k)\), where~\(k\) is the
number of internal degrees of freedom in each lattice cell.  We assume
that there is no magnetic field, so that the lattice translations act
simply by \(S_n f(x) = f(x-n)\) for \(x,n\in\Z^d\),
\(f\in\ell^2(\Z^d,\C^k)\).  The translations~\(S_n\) generate the
group \(\Cst\)\nb-algebra \(\Cst(\Z^d)\), which is commutative.  The
Fourier transform identifies \(\Cst(\Z^d)\) with \(\Cont(\T^d)\).

A tight-binding Hamiltonian is a self-adjoint operator on
\(\ell^2(\Z^d,\C^k)\) that commutes with~\(S_n\) for all \(n\in\Z^d\)
and that has ``finite range''.  To explain the latter condition, we
first note that any operator~\(H\) that commutes with all the
translations~\(S_n\) is of the form
\[
  (H f)(x) = \sum_{n\in\Z^d} H_n f(x-n)
\]
with \(H_n\in\Mat_k(\C)\) for \(n\in\Z^d\).  The finite range
assumption means that only finitely many of the matrices~\(H_n\) are
nonzero.  More generally, we may allow~\(H\) to belong to the norm
closure of this set of operators.  This is the same as the tensor
product of \(\Cst(\Z^d)\) acting on \(\ell^2(\Z^d)\) with
\(\Mat_k(\C)\), and the Fourier transform identifies it with
\(\Cont(\T^d,\Mat_k(\C))\).

The material described by the Hamiltonian~\(H\) is an insulator if and
only if~\(H\) is invertible.  (Here we assumed the Fermi energy to be
zero for simplicity.)  Let \(\chi\colon \R\to\{0,1\}\) be the
characteristic function of the negative numbers.  This is continuous
on the spectrum of~\(H\), and \(\chi(H)\) is a projection in the
\(\Cst\)\nb-algebra
\(\Cst(\Z^d) \otimes \Mat_k(\C) \cong \Cont(\T^d,\Mat_k(\C))\).  The
topological phase of the physical system described by~\(H\) is often
defined as the homotopy class of \(\chi(H)\) in the set of projections
in \(\Cont(\T^d,\Mat_k(\C))\).  By the Serre--Swan Theorem,
\(\chi(H)\) corresponds to a vector bundle over~\(\T^d\).  This is the
\emph{Bloch bundle} of~\(H\).  Its fibre at \(z\in\T^d\) is the image
of \(\chi(H)(z) \in \Mat_k(\C)\), and the continuity of the function
\(\chi(H)\) ensures that these subspaces are locally trivial.  The
homotopy class of~\(\chi(H)\) may contain even more information than
the isomorphism class of the Bloch bundle (see
\cite{Bakuradze-Meyer:Iso_stable_iso}*{Section~4}), but we shall focus
on the Bloch bundle in the following.

Now we assume that~\(H\) has some extra crystallographic symmetries.
These are given by an extension
\(L\supseteq \Z^d\) that acts on the lattice~\(\Z^d\).  We denote this
action by \(L \times \Z^d \to \Z^d\), \((l,x)\mapsto \tau_l(x)\).  The
quotient group \(G\defeq L/\Z^d\) is a finite group called the
\emph{point group} of the crystal.  The \(L\)\nb-action on~\(\Z^d\)
induces an action \(\tau_l^*\) on \(\ell^2(\Z^d)\) with
\((\tau_l^*f)(n) \defeq f(\tau_l^{-1} n)\) for all \(l\in L\),
\(n\in\Z^d\), \(f\in \ell^2(\Z^d)\).  This further induces an action
of~\(L\) on the group \(\Cst\)\nb-algebra
\(\Cst(\Z^d) \subseteq \Bound(\ell^2(\Z^d))\) by conjugation.
Since~\(\Z^d\) is Abelian, the translations in~\(L\) commute with
\(\Cst(\Z^d)\).  So the conjugation action on
\(\Cst(\Z^d) \cong \Cont(\T^d)\) factors through an action
\(\alpha\colon G \to \mathrm{Aut}(\Cont(\T^d))\) of the point
group~\(G\).

In addition, our symmetry group~\(L\) also acts on the Hilbert
space~\(\C^k\) of internal degrees of freedom.  We still assume,
however, that translations act trivially on~\(\C^k\), so that we get
some unitary group representation
\(\varrho\colon L \twoheadrightarrow G\to \mathrm{U}(k)\).  Then~\(L\)
acts on \(\ell^2(\Z^d,\C^k) = \ell^2(\Z^d) \otimes \C^k\) by the
representation \(\tau^* \otimes \varrho\).  On the subgroup
\(\Z^d \subseteq L\), this gives the translation operators used above.
The Hamiltonian~\(H\) is assumed to commute with the operators
\(\tau_l^* \otimes \varrho(l)\) for all \(l\in L\).  For \(l\in\Z^d\),
this says that~\(H\) commutes with the translation~\(S_l\) above, but
it gives extra information for \(l\notin \Z^d\).  The conjugation
action of~\(L\) on \(\Cont(\T^d,\Mat_k(\C))\) factors through the
action \(\alpha \otimes \mathrm{Ad}_{\varrho}\) of~\(G\).  So the
crystallographic symmetry means that \(H\in\Cont(\T^d,\Mat_k(\C))\) is
\(G\)\nb-invariant.  Then so is \(\chi(H)\).  And this means that the
Bloch bundle is invariant under the \(G\)\nb-action on the trivial
vector bundle \(\T^d\times (\C^k,\varrho)\).  Thus the Bloch bundle is
a \(G\)\nb-equivariant vector bundle over~\(\T^d\).

Conversely, any \(G\)\nb-equivariant vector bundle over~\(\T^d\) is a
direct summand in a trivial vector bundle by Corollary~\ref{cor:Swan}.
So it is the image of a \(G\)\nb-invariant projection in
\(\Cont(\T^d,\Mat_k(\C))\) for some representation~\(\varrho\)
of~\(G\) on~\(\C^k\).  It is important here to allow nontrivial
representations of~\(G\) on~\(\C^k\).  To see this, assume for the
time being that~\(G\) acts trivially on~\(\C^k\).  Then the induced
action on \(\Cont(\T^d,\Mat_k(\C))\) is only on~\(\T^d\).  So the
fixed-point subalgebra of the \(G\)\nb-action becomes
\(\Cont(\T^d/G,\Mat_k(\C))\).  Thus we get ordinary vector bundles
over the quotient space~\(\T^d/G\).  These are much simpler objects
than \(G\)\nb-equivariant vector bundles over~\(\T^d\).

In order to apply our theorems to a concrete situation, we now
specialise to a particularly simple case: we assume that~\(L\)
consists only of the translations and the point reflection at the
origin, \(n\mapsto -n\).  Thus \(G=\Z/2\).  The induced action on the
torus~\(\T^d\) is the map \(z\mapsto z^{-1} = \conj{z}\).  The Bloch
bundle in this case is a \(\Z/2\)-equivariant \(\C\)\nb-vector bundle
over~\(\T^d\).  This is an absolute \(\Z/2\)-CW-complex
because~\(\T^d\) is a smooth manifold and~\(\Z/2\) acts smoothly on
it.  So the relevant results are Corollaries
\ref{cor:summand_absolute} and~\ref{cor:stable_iso_absolute}.  Here
\(X=\T^d\), \(G=\Z/2\), and \(\Field=\C\), so that only
\(\Field_\varrho=\C\) is possible.  The group~\(\Z/2\) has two
subgroups, the trivial one and~\(\Z/2\) itself.  First let
\(H=\{1\}\).  Then~\(X^{(H)}\) consists of all points in~\(\T^d\)
except the fixed points of the involution.  These are the \(2^d\)
points in \(\{\pm1\}^d \subseteq \T^d\).  So \(d_H= d\).  The
group~\(H\) only has the trivial representation~\(\varrho\) and so
\(m_\varrho(V_x)\) and \(m_\varrho(E_x)\) simplify to the dimension of
\(V_x\) and~\(E_x\) as \(\C\)\nb-vector spaces.  Since~\(\T^d\) is
connected, these two dimensions are the same for all \(x\in\T^d\).
These are just the ranks \(\rank(V)\) and \(\rank(E)\).  Next, let
\(H=\Z/2\).  Then \(X^{(H)} = \{\pm1\}^d\).  So \(d_H= 0\).  Let
\(x\in X^{(H)}\).  The group~\(H\) has only two irreducible
representations, namely, the trivial representation and the sign
character sending the nontrivial element to \(-1\).  We abbreviate
these and write \(m_+\) and~\(m_-\) for the multiplicities of the
trivial and the sign representation, respectively.  The fibres \(V_x\)
and~\(E_x\) are representations of~\(H\).  Since they decompose as a
direct sum of the irreducible representations,
\(m_+(V_x) + m_-(V_x) = \rank(V)\) and
\(m_+(E_x) + m_-(E_x) = \rank(E)\).

\begin{corollary}
  \label{cor:point_reflection}
  Let \(\Z/2\) act on~\(\T^d\) by point reflection.
  \begin{enumerate}
  \item Let \(V\) and~\(E\) be \(\Z/2\)-equivariant \(\C\)\nb-vector
    bundles over~\(\T^d\).  Assume that
    \(\rank(E) \ge \rank(V) + \left\lceil \frac{d-1}{2} \right\rceil\)
    and that \(m_\pm(E_x) \ge m_\pm(V_x)\) for all
    \(x \in \{\pm1\}^d\) and the two signs~\(\pm\).  Then there is an
    injective \(\Z/2\)\nb-equivariant \(\C\)\nb-vector bundle map
    \(V \to E\).
  \item Let \(V\), \(E_1\) and~\(E_2\) be \(\Z/2\)-equivariant
    \(\C\)\nb-vector bundles over~\(\T^d\) such that
    \(E_1 \oplus V \cong E_2 \oplus V\).  Assume that
    \(\rank(E) \ge \left\lceil \frac{d}{2} \right\rceil\).  Then
    \(E_1 \cong E_2\).
  \end{enumerate}
\end{corollary}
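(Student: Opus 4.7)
The plan is to unpack the general Corollaries~\ref{cor:summand_absolute} and~\ref{cor:stable_iso_absolute} for the specific setup already assembled in the paragraphs immediately preceding the statement. Since~\(\T^d\) with the point-reflection involution is a smooth \(\Z/2\)-manifold, it inherits an absolute \(\Z/2\)\nb-CW-structure. Exactly two conjugacy classes of stabilisers occur: the trivial subgroup at points \(x \neq -x\), and the full group~\(\Z/2\) at each of the~\(2^d\) fixed points in \(\{\pm 1\}^d\). The relevant numerical invariants are \(d_{\{1\}} = d\) (the non-fixed locus is an open \(d\)\nb-manifold), \(d_{\Z/2} = 0\) (the fixed points are isolated), and \(c_\varrho = 2\) for every irreducible \(\C\)\nb-representation in question, since \(\Field = \C\) forces \(\Endo(V) = \C\) by Schur's Lemma.

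For part~(1), I substitute these values into the inequality in Corollary~\ref{cor:summand_absolute}. At a non-fixed point the only irreducible representation of the trivial stabiliser is the trivial one, so multiplicities equal \(\C\)\nb-ranks of fibres and the condition reduces to \(\rank(E) \ge \rank(V) + \lceil (d-1)/2 \rceil\), which is the stated rank hypothesis. At a fixed point \(x \in \{\pm 1\}^d\) and for each sign \(\varrho \in \{+,-\}\) with \(m_\varrho(V_x) \neq 0\), the condition becomes
\[
  m_\varrho(E_x) \ge m_\varrho(V_x) + \left\lceil \frac{0 + 1 - 2}{2} \right\rceil = m_\varrho(V_x),
\]
because \(\lceil -1/2 \rceil = 0\). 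These are precisely the hypotheses of part~(1), so the existence of an injective \(\Z/2\)\nb-equivariant \(\C\)\nb-vector bundle map \(V \to E\) follows directly from Corollary~\ref{cor:summand_absolute}.

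For part~(2), I run the same bookkeeping on Corollary~\ref{cor:stable_iso_absolute}. At non-fixed points the condition becomes \(\rank(E_1) \ge \lceil d/2 \rceil\), which matches the stated hypothesis (and, via \(E_1 \oplus V \cong E_2 \oplus V\), is equivalent to the analogous bound for \(E_2\)). At a fixed point the condition reads \(m_\varrho((E_1)_x) \ge \lceil (0+2-2)/2 \rceil = 0\), which is automatic, so no constraint on the symmetry labels at the high-symmetry points is needed. Thus Corollary~\ref{cor:stable_iso_absolute} applies and yields \(E_1 \cong E_2\). The only care required in either part is the ceiling arithmetic at the fixed points, where the negative numerator in part~(1) and the zero numerator in part~(2) make the fixed-point condition collapse; the substantive geometric work has already been carried out in the proofs of the general theorems, so no further obstacle arises.
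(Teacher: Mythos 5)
Your proposal is correct and follows exactly the paper's own route: both simply substitute the explicit data (stabilisers \(\{1\}\) and \(\Z/2\), \(d_{\{1\}}=d\), \(d_{\Z/2}=0\), \(c_\varrho=2\) since \(\Field=\C\), and multiplicities equal to ranks at non-fixed points) into Corollaries \ref{cor:summand_absolute} and \ref{cor:stable_iso_absolute}, with the ceiling terms at the fixed points collapsing to the stated multiplicity condition in part (1) and to a vacuous condition in part (2). No further comment is needed.
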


\begin{proof}
  This follows by plugging the explicit values of the relevant
  quantities into Corollaries \ref{cor:summand_absolute}
  and~\ref{cor:stable_iso_absolute}.  For \(H=\{\Z/2\}\), the conditions
  simplify because \(d_H=0\) and so
  \(\frac{d_H+2-c_\varrho}{c_\varrho},
  \frac{d_H+1-c_\varrho}{c_\varrho} \le 0\).  We have dropped the
  condition \(m_\varrho((E_1)_x) \ge 0\) because it is always
  satisfied.
\end{proof}

So two stably isomorphic \(\Z/2\)\nb-equivariant \(\C\)\nb-vector
bundles over~\(\T^d\) are isomorphic as soon as their rank is at least
\(\lceil d/2\rceil\).  For \(d \le 2\), this only rules out the
trivial case of rank~\(0\).  For \(d \le 4\), it also rules out
rank~\(1\).  So in all cases except for line bundles, the corollary
above shows that stable isomorphism implies isomorphism.  The same is
true for real or complex line bundles over arbitrary spaces for a
different reason:

\begin{theorem}
  \label{the:line_bundle_stable_iso}
  Let~\(G\) be a compact group.  If two \(G\)-equivariant real or
  complex line bundles are stably isomorphic, then they are
  isomorphic.
\end{theorem}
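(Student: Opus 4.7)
The plan is to use the determinant (top exterior power) construction, which behaves well under direct sums and is trivial to invert when applied to line bundles. Given a $G$-equivariant stable isomorphism $\varphi\colon E_1 \oplus V \congto E_2 \oplus V$ between $G$-equivariant \(\Field\)\nb-line bundles \(E_1, E_2\) over some \(G\)\nb-space \(X\), where \(\Field \in \{\R,\C\}\), I would apply the top exterior power functor \(\det = \Lambda^{\mathrm{top}}\) to obtain a \(G\)\nb-equivariant isomorphism
\[
  \det(\varphi)\colon \det(E_1 \oplus V) \congto \det(E_2 \oplus V).
\]
Using the canonical natural isomorphism \(\det(A \oplus B) \cong \det(A) \otimes_\Field \det(B)\), and the fact that a line bundle equals its own determinant, this becomes a \(G\)\nb-equivariant isomorphism
\[
  E_1 \otimes_\Field \det(V) \cong E_2 \otimes_\Field \det(V).
\]

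Next, I would tensor both sides with the dual \(G\)\nb-equivariant line bundle \(\det(V)^*\), using the canonical \(G\)\nb-equivariant trivialisation \(\det(V) \otimes_\Field \det(V)^* \cong X \times \Field\). Since tensoring with a line bundle is an invertible operation (its inverse being tensoring with the dual), this produces the desired \(G\)\nb-equivariant isomorphism \(E_1 \cong E_2\). All constructions involved — exterior power, tensor product, dual — are functorial and hence preserve the \(G\)\nb-action and any compatible \(\Field\)-linear structure, so no extra equivariance check is needed beyond noting functoriality.

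There is essentially no hard step: the argument rests only on the identity \(\det(L) = L\) for a line bundle, the multiplicativity of \(\det\) under \(\oplus\), and the invertibility of line bundles under \(\otimes\). The only point deserving a brief comment is why the argument fails over \(\Quat\): the quaternionic determinant is not well-defined because \(\Quat\) is noncommutative, so one cannot form \(\Lambda^{\mathrm{top}}_\Quat\) in the same way. This nonavailability of a determinant is consistent with Example~\ref{exa:line_bundle_stable_iso}, which exhibits \(12\) pairwise stably isomorphic but nonisomorphic \(\Quat\)\nb-line bundles over \(\Sphere^7\).
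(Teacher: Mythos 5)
Your proposal is correct and follows essentially the same route as the paper's proof: take the top exterior power of the stable isomorphism, identify it with \(E_j \otimes \det(V)\), and cancel by tensoring with the inverse (dual) determinant line bundle, noting the construction is equivariant by functoriality. Your remark on why the argument has no quaternionic analogue also matches the paper's reference to Example~\ref{exa:line_bundle_stable_iso}.
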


\begin{proof}
  We write the proof down for complex bundles, the real case is
  analogous.  Example~\ref{exa:line_bundle_stable_iso} shows that the
  result breaks down for quaternionic bundles.  Let \(L, L'\) be
  \(G\)\nb-equivariant complex line bundles over a
  \(G\)\nb-space~\(X\).  Assume that there is a \(G\)\nb-equivariant
  complex vector bundle~\(V\) of rank~\(n\) such that
  \[
    L \oplus V \cong L' \oplus V
  \]
  as complex \(G\)\nb-equivariant vector bundles.  Taking the top
  exterior power of both sides gives
  \[
    \bigwedge^{n+1} {}(L \oplus V) \cong \bigwedge^{n+1} {}(L' \oplus V).
  \]
  Since \(L\) and~\(L'\) are of rank~\(1\) and~\(V\) is of rank~\(n\),
  \[
    \bigwedge^{n+1}{}(L \oplus V) \cong L \otimes \det(V),\qquad
    \bigwedge^{n+1}{}(L' \oplus V) \cong L' \otimes \det(V).
  \]
  for the equivariant determinant line bundle \(\det(V)\).  So we get
  an an equivariant isomorphism
  \[
    L \otimes \det(V) \cong L' \otimes \det(V).
  \]
  Tensoring both sides with the inverse equivariant line bundle
  \(\det(V)^{-1}\) gives \(L\cong L'\) as desired.
\end{proof}

\begin{corollary}
  Stably isomorphic \(\Z/2\)\nb-equivariant \(\C\)\nb-vector bundles
  over~\(\T^d\) are isomorphic if \(d\le 4\).
\end{corollary}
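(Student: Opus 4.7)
The plan is to split into cases based on the rank of the bundles. Since $\T^d$ is connected, any $\Z/2$-equivariant $\C$-vector bundle over $\T^d$ has constant rank, and stably isomorphic bundles have the same rank. So let $E_1, E_2$ be $\Z/2$-equivariant $\C$-vector bundles over $\T^d$ with $E_1 \oplus V \cong E_2 \oplus V$ for some $V$, and let $r = \rank(E_1) = \rank(E_2)$.

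If $r = 0$, both bundles are zero and there is nothing to prove. If $r \ge 2$, then since $d \le 4$ gives $\lceil d/2 \rceil \le 2 \le r$, the second part of Corollary~\ref{cor:point_reflection} applies directly and yields $E_1 \cong E_2$. The only remaining case is $r = 1$, where the bundles are equivariant complex line bundles. For this case, Corollary~\ref{cor:point_reflection} is not strong enough when $d \in \{3,4\}$, but Theorem~\ref{the:line_bundle_stable_iso} applies to equivariant complex line bundles over an arbitrary $G$-space with $G$ compact, giving $E_1 \cong E_2$ immediately.

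Combining the three cases, stable isomorphism of $\Z/2$-equivariant $\C$-vector bundles over $\T^d$ with $d \le 4$ always implies isomorphism. There is no real obstacle here; the corollary is a direct synthesis of the two preceding results, with Corollary~\ref{cor:point_reflection} handling bundles of rank at least~$2$ and Theorem~\ref{the:line_bundle_stable_iso} plugging the line bundle gap that arises in dimensions~$3$ and~$4$.
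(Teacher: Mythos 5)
Your proof is correct and follows essentially the same route as the paper, which also combines the rank bound $\lceil d/2\rceil \le 2$ from Corollary~\ref{cor:point_reflection} for bundles of rank at least~$2$ with Theorem~\ref{the:line_bundle_stable_iso} for the remaining line bundle case (rank~$0$ being trivial).
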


The existence of trivial direct summands is a different matter,
however.  To test this, we would take \(V = \T^d \times \C_\pm\),
where~\(\C_\pm\) denotes~\(\C\) with \(\Z/2\) acting by the trivial or
nontrivial character.  So \(\rank(V)=1\).  If \(d \le 3\), then
\(\rank(E) \ge \rank(V) + \left\lceil \frac{d-1}{2} \right\rceil\)
holds already for \(\rank(E) \ge 2\).  However, we also need the
condition \(m_\pm(E_x) \ge m_\pm(V_x)\) for all \(x \in \{\pm1\}^d\)
and the two signs~\(\pm\).  This means that \(E_x\) must contain the
appropriate representation~\(\C_\pm\) at all \(x\in \{\pm1\}^d\).
This may fail, already for the circle~\(\T^1\).  The following example
shows this already in the simplest case:

\begin{example}
  \label{exa:involutions_and_circle}
  Let \(G=\Z/2\) and let \(X=\Sphere^1\) with the generator
  of~\(G\) acting by complex conjugation, \(z\mapsto \conj{z}\).  We
  claim that there are \(G\)\nb-equivariant complex vector bundles
  over~\(\Sphere^1\) of arbitrarily high rank that do not contain
  any trivial subbundle.  To prove this, we completely classify these
  bundles.

  Any complex vector bundle over~\(\Sphere^1\) is trivial.  After
  trivialising the underlying vector bundle, a \(G\)\nb-equivariant
  complex vector bundle of rank~\(k\) over~\(\Sphere^1\) becomes
  \(\Sphere^1\times \C^k\) with the generator of~\(\Z/2\) acting by
  \((z,v)\mapsto (\conj{z},\Theta_z(v))\) for some linear maps
  \(\Theta_z\in \Gl(k,\C)\) for \(z\in\Sphere^1\), subject to the
  condition that~\(\Theta_z\) is inverse to~\(\Theta_{\conj{z}}\).  In
  particular, \(\Theta_{\pm1}\) are involutions on~\(\C^k\).  In
  addition, \(\Theta_z\) for \(z = x+ \ima \sqrt{1-x^2}\) in the upper
  half circle may be prescribed arbitrarily, and then
  \(\Theta_{x-\ima \sqrt{1-x^2}} \defeq \Theta_{x + \ima
    \sqrt{1-x^2}}^{-1}\) will give~\(\Theta\) on the entire circle.
  Thus a \(\Z/2\)\nb-equivariant vector bundle over~\(\Sphere^1\)
  is represented by two involutions~\(\Theta_{\pm1}\) on~\(\C^k\)
  together with a homotopy between them in \(\Gl(k,\C)\).  We may
  change the trivialisation of the underlying complex vector bundle by
  an arbitrary map \(R\colon \Sphere^1 \to \Gl(k,\C)\), and this
  changes~\(\Theta_z\) to \(R_{\conj{z}} \Theta_z R_z^{-1}\).  In
  particular, we may take \(R_z = 1\) for \(\IM z \ge 0\) and let
  \(R_z\) for \(\IM z \le 0\) be an arbitrary loop in \(\Gl(k,\C)\)
  based at~\(1\).  Since two homotopies between \(\Theta_{\pm1}\)
  differ exactly by such a loop, we see that the choice of the
  homotopy between them does not matter for the isomorphism class of
  the vector bundle.  Since \(\Gl(k,\C)\) is connected, it follows
  that up to isomorphism, the representation is completely determined
  by the two involutions \(\Theta_{\pm1}\).  In fact, these two
  matter only up to conjugacy, and so the only invariant that remains
  are the dimensions~\(\ell_{\pm1}\) of the \(-1\)-eigenspaces
  of~\(\Theta_{\pm1}\).  So for each pair of natural numbers
  \((\ell_{-1},\ell_1)\) between \(0\) and~\(k\), there is exactly one
  \(\Z/2\)-equivariant complex vector bundle over~\(\Sphere^1\).

  Among these bundles, the trivial ones are exactly those with
  \(\ell_{-1} = \ell_1\).  Taking direct sums of equivariant vector
  bundles corresponds to addition of these pairs of numbers.
  Therefore, the vector bundle corresponding to \((\ell_{-1},\ell_1)\)
  has a trivial vector bundle as a direct summand if and only if both
  \(\ell_{-1} \ge1\) and \(\ell_1 \ge1\).  So the equivariant vector
  bundles corresponding to \((0,k)\) and \((k,0)\) do not contain any
  trivial vector subbundle.  Here the rank~\(k\) may be arbitrarily
  large.
\end{example}

As a consequence, the stabilisation map for \(\Z/2\)\nb-equivariant
vector bundles over~\(\T^1\) is not surjective, no matter how high the
rank of the vector bundles.

\smallskip

Next, we consider a system that has both the point reflection
symmetry~\(R\) and a time-reversal symmetry~\(\Theta\).  These must
act by \((R f)(n) = R_0(f(-n))\) and
\((\Theta f)(n) = \Theta_0(f(n))\) for all \(n\in\Z^d\),
\(f\in\ell^2(\Z^d)\), where \(R_0\colon \C^k \to \C^k\) is unitary and
\(\Theta_0\colon \C^k \to \C^k\) is antiunitary.  The maps \(R^2\),
\(\Theta^2\) and \((R\Theta)^2\) are unitary symmetries of the system,
and we assume that they act trivially on the states of the system,
meaning that they are scalar multiples of the identity operator on
\(\ell^2(\Z^d,\C^k)\).  Multiplying~\(R\) by a scalar does not change
the system in a physically observable way.  In this way, we may
arrange that \(R^2=1\).  Since~\(\Theta\) is antiunitary, the scalar
factor in the operator~\(\Theta^2\) is forced to be real because
\(\Theta^2\) commutes with the antiunitary operator~\(\Theta\).  So
\(\Theta^2 = (-1)^a\) for some \(a\in\Z/2\).  Similarly,
\((R \Theta)^2 = (-1)^b\) for some \(b\in\Z/2\).

Systems that only have a time reversal symmetry~\(\Theta\) give two of
the ten fundamental Altland--Zirnbauer symmetry classes that form the
periodic table of topological insulators, where \(\Theta^2 = 1\)
corresponds to the symmetry class AI and \(\Theta^2 = -1\) to the
symmetry class AII.  Systems with both time-reversal and
point-reflection symmetry give symmetry-protected topological phases
that go beyond the standard tenfold way.

Under these assumptions, the set of \(\R\)\nb-linear maps on~\(\C^j\)
\[
  G = \setgiven{\ima ^j}{j\in\Z/4} \cdot \{1, \Theta, R, \Theta R\}
\]
is a group of with~\(16\) elements, and \(t = \ima^2\in G\) is a
central involution.  The map sending \(\ima,\Theta,R\) to
\(\ima,\Theta_0,R_0\) is an odd representation of~\(G\) on~\(\C^k\).
We assume that the Hamiltonian~\(H\) is an invertible operator that
commutes with the action of~\(G\) on \(\ell^2(\Z^d,\C^k)\).  Then the
resulting projection~\(\chi(H)\) also commutes with~\(G\).  This means
that the Bloch bundle is a \(G\)\nb-invariant direct summand of the
trivial \(G\)\nb-equivariant, odd \(\R\)\nb-vector bundle
\(\T^d\times\C^k\).  Here~\(G\) acts on~\(\T^d\) as follows: \(\ima\)
acts trivially and both \(R\) and~\(\Theta\) act by
\(z\mapsto z^{-1} = \conj{z}\).  As a consequence, the stabiliser
group of~\(z\) is all of~\(G\) if \(z \in \{\pm1\}^d\) and the
subgroup~\(H\) of index~\(2\) generated by \(\ima\) and~\(R\Theta\) if
\(z \in \T^d\setminus \{\pm1\}^d\).  The following lemma identifies
\(G\) and this subgroup with the groups~\(G_{p,q}\) introduced above
Proposition~\ref{pro:Cliff_groups}.

\begin{lemma}
  \label{lem:stabilisers_reflection_time}
  There are isomorphisms \(H \cong G_{1,1}\) if \(b=[0]_2\) and
  \(H \cong G_{0,2}\) if \(b=[1]_2\), and \(G \cong G_{2,1}\) if
  \(a=b=[0]_2\), \(G \cong G_{1,2}\) if \(a+b=[1]_2\), and
  \(G \cong G_{0,3}\) if \(a=b=[1]_2\).
\end{lemma}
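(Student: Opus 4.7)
The plan is to identify $H$ and $G$ by exhibiting explicit anticommuting generators whose squares match those of a Clifford group $G_{p,q}$. Since $G_{p,q}$ always contains $-1$ (it is the square of any product $e_i e_j$ of two anticommuting generators when $p+q\ge 2$), and since $|H|=2^{3}$, $|G|=2^{4}$, any $k$ pairwise anticommuting elements with the prescribed squares in our group automatically generate a subgroup isomorphic to $G_{p,q}$ with $k=p+q$; matching orders then forces that subgroup to be the whole of $H$ or $G$.

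I first collect the commutation data. Because $R$ is $\C$\nb-linear (unitary) while $\Theta$ is antilinear, multiplication by $\ima$ satisfies $R\ima=\ima R$ and $\Theta\ima=-\ima\Theta$, so $R\Theta$ anticommutes with $\ima$. Expanding $(R\Theta)^{2}=R\Theta R\Theta$ using $R^{2}=1$, $\Theta^{2}=(-1)^{a}$ and $(R\Theta)^{2}=(-1)^{b}$ forces $\Theta R=(-1)^{a+b}R\Theta$. For $H$, the two generators $\ima$ and $R\Theta$ anticommute and have squares $-1$ and $(-1)^{b}$; this identifies $H\cong G_{1,1}$ when $b=0$ and $H\cong G_{0,2}$ when $b=1$.

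For $G$, I would choose a third generator according to the parity of $a+b$. When $a+b\equiv 0\pmod 2$, so $\Theta R=R\Theta$, the triple $(\ima,\Theta,\ima R\Theta)$ pairwise anticommutes and has squares $(-1,(-1)^{a},(-1)^{a})$; this is signature $(2,1)$ if $a=0$ and $(0,3)$ if $a=1$, identifying $G$ with $G_{2,1}$ and $G_{0,3}$ respectively. When $a+b\equiv 1\pmod 2$, so $\Theta R=-R\Theta$, the triple $(\ima,\Theta,R\Theta)$ pairwise anticommutes and has squares $(-1,(-1)^{a},(-1)^{b})$; regardless of the individual values of $a,b$ this has signature $(1,2)$, so $G\cong G_{1,2}$.

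The only real obstacle is that the three obvious generators $\ima,R,\Theta$ are \emph{not} pairwise anticommuting, because $R$ is $\C$\nb-linear and so commutes with $\ima$; one must therefore twist the troublesome generator by a factor of $\ima$ or of $R\Theta$, with the correct twist dictated by $a+b$. Once this twist is found, each case reduces to a routine direct verification of squares and pairwise anticommutators using only $\Theta\ima=-\ima\Theta$, $R\ima=\ima R$ and $\Theta R=(-1)^{a+b}R\Theta$.
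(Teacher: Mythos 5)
Your computations and case division are correct, and your strategy is the same as the paper's: exhibit pairwise anticommuting generators with prescribed squares and identify \(H\) and \(G\) with the corresponding Clifford group \(G_{p,q}\). The relations \(R\ima=\ima R\), \(\Theta\ima=-\ima\Theta\) and \(\Theta R=(-1)^{a+b}R\Theta\) are right, your pairs and triples really do anticommute, their squares give exactly the signatures \((1,1)\), \((0,2)\), \((2,1)\), \((1,2)\), \((0,3)\) claimed, and your observation that the third generator must be twisted by \(\ima\) according to the parity of \(a+b\) is precisely the point of the argument (the paper instead keeps \(R\Theta\) and chooses between \(\Theta\) and \(\ima\Theta\), which is an equivalent twist).

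There is, however, one overstated step. It is not true that \(k\) pairwise anticommuting elements with the prescribed squares ``automatically generate a subgroup isomorphic to \(G_{p,q}\)'': the elements \(\ima,j,k\in\Quat\) pairwise anticommute and each squares to \(-1\), yet they generate \(Q_8\), a proper quotient of \(G_{0,3}\) of order \(8\), because their product is the scalar \(-1\). In general the generated subgroup is a quotient of \(G_{p,q}\) by a normal subgroup avoiding \(-1\), and for the signatures \((2,1)\) and \((0,3)\) (your cases \(a=b\)) such quotients exist, namely by \(\{1,\pm e_1e_2e_3\}\); knowing that \(-1\) lies in the generated group does not rule this out. (Also, \((e_ie_j)^2=-e_i^2e_j^2\) equals \(-1\) only when \(e_i^2=e_j^2\), so that parenthetical justification is off in mixed signature; here \(-1=\ima^2\) anyway.) The gap closes with data you already have: for \(a+b=[0]_2\) the product of your three generators is \(\ima\cdot\Theta\cdot(\ima R\Theta)=\Theta R\Theta=\pm R\), not a scalar, so the generated subgroup contains \(\ima\), \(\Theta\) and \(R\) and hence is all of \(G\); since \(\abs{G}=16=\abs{G_{p,q}}\), the surjection \(G_{p,q}\twoheadrightarrow G\) defined by the relations is an isomorphism. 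The same order count \(\abs{H}=8\) settles the subgroup case (where, for two generators, no quotient of \(D_8\) or \(Q_8\) retaining the central element \(-1\) exists anyway, every nontrivial normal subgroup of a \(2\)-group meeting the centre \(\{\pm1\}\)). With this one-line repair your proof is complete and essentially coincides with the paper's.
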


\begin{proof}
  The subgroup~\(H\) acting on \(\ell^2(\Z^d,\C^k)\) is generated by
  the two anticommuting \(\R\)\nb-linear invertible maps \(\ima\)
  and~\(R\Theta\), which satisfy \(\ima^2=-1\) and
  \((R\Theta)^2 = (-1)^b\).  By definition, this gives~\(G_{p,q}\) as
  in the statement.  The whole group~\(G\) has one extra generator,
  where we may use either \(\Theta\) or~\(\ima \Theta\).  Both
  \(\Theta\) and \(\ima\Theta\) anticommute with~\(\ima\) and satisfy
  \((\ima \Theta)^2 = - \ima^2 \Theta^2 = (-1)^a = \Theta^2\).  If
  \(b=[0]_2\), then~\(\ima \Theta\) anticommutes with~\(\Theta R\).
  If \(b=[1]_2\), then~\(\Theta\) anticommutes with~\(\Theta R\).  In
  either case, \(G\) is generated by three anticommuting operators
  with squares \(-1\), \((-1)^a\), \((-1)^b\).  These generate a
  Clifford algebra and identify~\(G\) with~\(G_{p,q}\) for suitable
  \(p,q\), which are as in the statement of the lemma.
\end{proof}

By Proposition~\ref{pro:Cliff_groups}, the groups \(G_{1,1}\),
\(G_{0,2}\) and~\(G_{1,2}\) have a unique irreducible odd
representation, whereas \(G_{2,1}\) and \(G_{0,3}\) do not.  As a
consequence, for \(a+b=[1]_2\), that is, \((a,b) = ([0]_2,[1]_2)\) or
\((a,b) = ([1]_2,[0]_2)\), there are variants of Corollaries
\ref{cor:summand_real} and~\ref{cor:summand_quaternionic} for
\(G\)\nb-equivariant odd \(\R\)\nb-vector bundles over arbitrary
spaces~\(X\) that use only the ranks of the bundles:

\begin{corollary}
  \label{cor:summand_reflection_time}
  Let \(a,b\in\Z/2\) be such that \(a+b=[1]_2\).  Define the
  groups \(G\) and \(H\subseteq G\) as above.  Let \((X,A)\) be a
  relative \(\Z/2\)-CW-complex, turned into a \(G\)\nb-CW-complex by
  the quotient map \(G \twoheadrightarrow G/H \cong \Z/2\).
  Let~\(d_0\) be the maximal dimension of trivial cells and~\(d_1\)
  the maximal dimension of free cells in~\((X,A)\).
  \begin{enumerate}
  \item \label{en:summand_reflection_time_1}%
    Let \(q\colon V\to X\) and \(p\colon E\to X\) be
    \(G\)\nb-equivariant odd \(\R\)\nb-vector bundles over~\(X\) of
    constant rank.  Let \(\iota\colon V|_A \to E|_A\) be an injective
    \(G\)\nb-equivariant \(\R\)\nb-vector bundle map.  Assume that
    \[
      \rank(E) \ge \rank(V) +
      \begin{cases}
      \max \{ 2 d_0-2, 2 d_1 \}& \text{if }b=[0]_2,\\
      \max \{ 2 d_0-2, d_1 - 3 \}& \text{if }b=[1]_2.
      \end{cases}
    \]
    Then~\(\iota\) extends to an injective \(G\)\nb-equivariant
    \(\R\)\nb-vector bundle map \(V \to E\).
  \item \label{en:summand_reflection_time_2}%
    Let \(q\colon V\to X\) and \(p_j\colon E_j\to X\) for
    \(j=1,2\) be \(G\)\nb-equivariant odd \(\R\)\nb-vector bundles
    over~\(X\) of constant rank.  Let
    \(\varphi_A\colon E_1|_A \congto E_2|_A\) and
    \(\varphi_V\colon E_1 \oplus V \congto E_2 \oplus V\) be
    \(G\)\nb-equivariant \(\R\)\nb-vector bundle isomorphisms such
    that \(\varphi_V|_A = \varphi_A \oplus \Id_V\).  Assume that
    \[
      \rank(E) \ge
      \begin{cases}
        \max \{ 2 d_0, 2 d_1+2 \}& \text{if }b=[0]_2,\\
        \max \{ 2 d_0, d_1 - 2 \}& \text{if }b=[1]_2.
      \end{cases}
    \]
    Then there is a \(G\)\nb-equivariant \(\R\)\nb-vector bundle
    isomorphism \(\varphi\colon E_1 \congto E_2\) such that
    \(\varphi|_A = \varphi_A\).
  \end{enumerate}
\end{corollary}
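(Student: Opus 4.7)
The plan is to specialise Theorems \ref{the:summand} and \ref{the:stable_iso} along the pattern of Corollaries \ref{cor:summand_real} and \ref{cor:summand_quaternionic}. First, I view $(X,A)$ as a relative $G$-CW-complex via the quotient map $G \twoheadrightarrow G/H \cong \Z/2$; then only two stabilisers occur, namely $G$ itself at points fixed by the involution (so $d_G = d_0$) and $H$ at the remaining points (so $d_H = d_1$).

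Next, Lemma \ref{lem:stabilisers_reflection_time} identifies $G \cong G_{1,2}$ since $a+b=[1]_2$, and $H \cong G_{1,1}$ or $H \cong G_{0,2}$ according to $b=[0]_2$ or $b=[1]_2$. Each of these three Clifford groups appears in Proposition \ref{pro:Cliff_groups}, so each admits a unique irreducible odd representation~$\varrho$. The pairs $(\dim_\R \varrho,\, c_\varrho)$ can be read off from the Clifford algebra isomorphisms $\Cliff_{1,2} \cong \Mat_2(\C)$, $\Cliff_{1,1} \cong \Mat_2(\R)$ and $\Cliff_{0,2} \cong \Quat$ appearing in the proof of that proposition: one obtains $(4,2)$, $(2,1)$ and $(4,4)$ respectively. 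Because the odd irrep is unique at each stabiliser, the multiplicity $m_\varrho$ in any odd fibre of $\R$-rank $r$ equals $r/\dim_\R \varrho$, exactly as in the earlier corollaries.

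I then substitute these values of $m_\varrho(V_x)$, $m_\varrho(E_x)$, $c_\varrho$ and $d_H$ into Theorems \ref{the:summand} and \ref{the:stable_iso} at the two types of stabiliser, and take the maximum of the resulting conditions. The main routine obstacle is verifying that the linear bounds stated in the corollary (such as $2d_0-2$ and $d_1-3$) faithfully encode the ceiling expressions produced by the main theorems. This works because oddness forces $\rank(E)$ and $\rank(V)$, and hence their difference, to be divisible by $\dim_\R \varrho = 4$ at fixed points and by $\dim_\R \varrho \in \{2,4\}$ at free points; this divisibility absorbs a factor inside the ceiling, letting expressions such as $4\lceil (d_0-1)/2 \rceil$ be replaced by the rougher but equivalent $2d_0 - 2$, and similarly for the other three expressions in the statement. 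With this bookkeeping in place, both parts of the corollary follow directly from Theorems \ref{the:summand} and \ref{the:stable_iso}.
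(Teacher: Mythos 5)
Your proposal is correct and follows essentially the same route as the paper: identify the stabilisers via Lemma \ref{lem:stabilisers_reflection_time}, read off the unique odd irreducible representation and the pairs $(\dim_\R\varrho, c_\varrho)$ from the Clifford algebra isomorphisms, convert multiplicities to ranks, and plug into Theorems \ref{the:summand} and \ref{the:stable_iso}. Your divisibility argument for removing the ceilings is the same observation the paper makes (the relevant multiplicity differences are integers, so the ceiling can be dropped), and the resulting bounds match the stated ones.
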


\begin{proof}
  The unique simple modules over the relevant Clifford algebras
  are~\(\R^2\) for \(\Cliff_{1,1} \cong \Mat_2(\R)\); \(\Quat\) for
  \(\Cliff_{0,2} \cong \Quat\); and~\(\C^2\) for
  \(\Cliff_{1,2} \cong \Mat_2(\C)\).  The unique irreducible odd
  representations of~\(G_{p,q}\) are obtained by restriction, so these
  map the group algebra of~\(G_{p,q}\) to \(\Mat_2(\R)\), \(\Quat\)
  and~\(\Mat_2(\C)\), respectively.  As a consequence, the
  factors~\(c_\varrho\) are \(1,4,2\) in these three cases, and the
  multiplicities of the unique odd irreducible representation in a
  representation in a vector space of \(\R\)\nb-dimension~\(r\) are
  \(r/2\), \(r/4\), and \(r/4\), respectively.  We lose nothing if we
  leave out the ceiling operation because multiplicities are integers
  anyway.  Using the descriptions of the stabiliser groups in
  Lemma~\ref{lem:stabilisers_reflection_time} and plugging the values
  above into Theorems \ref{the:summand} and~\ref{the:stable_iso} now
  gives the statements in the corollary.
\end{proof}

If~\(X\) is the \(d\)\nb-torus with \(d\ge 1\), then \(d_0 = d\ge1\)
and \(d_1=0\).  So the condition in Corollary
\ref{cor:summand_reflection_time}.(\ref{en:summand_reflection_time_2})
simplifies to \(\rank(E) \ge 2 d\).  Since the torus is connected, the
rank is constant.  At the fixed points in \(\{\pm 1\}^d\), the fibre
is a representation of the group \(G = G_{1,2}\), which forces the
rank to be a multiple of~\(4\).  Since rank~\(0\) is trivial, stable
isomorphism implies isomorphism for all bundles for \(d=1\), and for
all bundles except those of rank~\(4\) for \(d=2,3\).

Now let \(a=b=[0]_2\).  Then
\(G= G_{2,1} \subseteq \Cliff_{2,1} \cong\Mat_2(\R)\oplus
\Mat_2(\R)\).  So there are two irreducible odd representations, which
are both of real type and dimension~\(2\).  Similarly, if
\(a=b=[1]_2\), then
\(G= G_{0,3} \subseteq \Cliff_{0,3} \cong \Quat \oplus \Quat\).  So
there are two irreducible odd representations, which are both of
quaternionic type and of dimension~\(4\).  In both cases, it is
impossible to replace multiplicities by ranks at the fixed points in
\(\{\pm1\}^d\).  There is, however, still a unique irreducible
representation of the subgroup~\(H\).  This gives analogues of
Corollary~\ref{cor:point_reflection} for these two cases, which we
leave to the reader to work out.

Our analysis in Corollary~\ref{cor:summand_reflection_time} provides
concrete predictions for $d$-dimensional topological materials with
coexisting point reflection and time-reversal symmetries.  For
instance, consider a 3D system ($X=\T^3$, $d_0=0$, $d_1=3$).  If
$\Theta^2 = +1$ and $R^2 = -1$ (\(a=[0]_2\), \(b=[1]_2\)), then stable
isomorphism implies isomorphism for any bundle of rank greater
than~\(1\), that is, for all vector bundles.  If $\Theta^2 = -1$ and
$R^2 = +1$ (\(a=[1]_2\), \(b=[0]_2\)), then stable isomorphism implies
isomorphism for any bundle of \(\R\)\nb-rank greater than~\(8\).  In
this case, the unique odd irreducible representation at the
high-symmetry points in~\(\T^3\) has dimension~\(4\) over~\(\R\), so
that the rank is always a multiple of~\(4\).  Thus rank~\(4\) is the
only case where there could be a ``fragile'' topological phase, that
is, a topological phase that is stably trivial but not trivial.

\begin{bibdiv}
  \begin{biblist} \bibselect{references}
  \end{biblist}
\end{bibdiv}
\end{document}